\newtheorem{definition}{Definition}[section]
\newtheorem{proposition}[definition]{Proposition}
\newtheorem{remark}[definition]{Remark}
\newtheorem{lemma}[definition]{Lemma}
\newtheorem{theorem}[definition]{Theorem}
\newtheorem{corollar}[definition]{Corollary}
\newtheorem{construction}[definition]{Construction}
\newtheorem{example}[definition]{Example}
\DeclareMathOperator{\ord}{ord}
\DeclareMathOperator{\Vol}{Vol}
\begin{document}





\title{The plurigenera of nondegenerate minimal toric hypersurfaces}
\author{Julius Giesler \\ University of T\"ubingen}
\date{\today}
\maketitle
\begin{abstract}
	In this article we present a formula for the plurigenera of minimal models of nondegenerate toric hypersurfaces, which is valid in arbitrary dimension and which expresses these invariants through lattice points on the Fine interior. From this formula we derive a formula for the volume of the canonical divisor of the toric hypersurface. We also study the pluricanonical mappings and show under some restrictions how to construct a canonical model of the toric hypersurface.
\end{abstract}


\pagestyle{plain}

\section{Introduction}

In (\cite[(4.12),(4.13)]{Rei87}) it was formulated that one might use a special polyotpe, called the \textit{Fine interior}, to compute the plurigenera of nondegenerate toric hypersurfaces. This idea goes back to the doctoral thesis of J. Fine (\cite{Fine83}). But the article of M. Reid lacks an explicit formula. In this article we provide such a formula for the plurigenera of minimal models of nondegenerate toric hypersurfaces:
\begin{theorem} \label{theorem_intro_plurigenera}
	Let $\Delta$ be an $n$-dimensional lattice polytope, where $n \geq 2$, with $k:= \dim \, F(\Delta) \geq 0$ and let $Y$ be a minimal model of $Z_f$. Then for $m \geq 1$ the plurigenera $P_m(Y):= h^{0}(Y, m K_Y)$ are given by
	\[
	P_m(Y) = \left\{ \begin{array}{lll} l(m \cdot F(\Delta)) - l^*((m-1) \cdot F(\Delta)), & k = n \\
	l(m \cdot F(\Delta)) + l^*((m-1) \cdot F(\Delta)), & k = n-1 \\
	l(m \cdot F(\Delta)) & k < n-1,
	
	\end{array}\right.
	\]
\end{theorem}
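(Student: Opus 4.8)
The plan is to compute the plurigenera on a conveniently chosen model and reduce everything to lattice-point counts via the toric dictionary. Since the $P_m(Y)$ are birational invariants, I may replace $Y$ by any variety birational to the minimal model on which the canonical sheaf is easy to control. So first I would build a complete toric variety $X$ whose fan refines the normal fan of $\Delta$ and contains rays through every primitive vector that is active in cutting out $F(\Delta)$, arranged so that the nondegenerate hypersurface $Y \subset X$ realizes the minimal model and $K_X + Y$ is nef. On such an $X$ the crucial identity is that $K_X + Y$ corresponds to the polytope $F(\Delta)$: writing $Y = \sum_\rho a_\rho D_\rho$ with $a_\rho = -\ord_\Delta(v_\rho)$ and $K_X = -\sum_\rho D_\rho$, the divisor polytope of $m(K_X+Y)$ is exactly $\{x : \langle x, v_\rho\rangle \geq m(\ord_\Delta(v_\rho)+1)\} = m\,F(\Delta)$, so that $h^0(X, m(K_X+Y)) = l(m\,F(\Delta))$.

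The second step is the adjunction short exact sequence
\[
0 \to \mathcal{O}_X\bigl(mK_X+(m-1)Y\bigr) \to \mathcal{O}_X\bigl(m(K_X+Y)\bigr) \to \mathcal{O}_Y(mK_Y) \to 0,
\]
coming from $mK_Y = m(K_X+Y)|_Y$ and the ideal sequence of $Y$ twisted by $m(K_X+Y)$. Because $m(K_X+Y)$ is nef, Demazure vanishing gives $H^{>0}(X, m(K_X+Y))=0$, so the connecting homomorphism is surjective and the long exact sequence collapses to
\[
P_m(Y) = h^0\bigl(X, m(K_X+Y)\bigr) - h^0\bigl(X, mK_X+(m-1)Y\bigr) + h^1\bigl(X, mK_X+(m-1)Y\bigr).
\]
The first term is $l(m\,F(\Delta))$ by the previous step, so it remains to evaluate the two cohomology groups of $L' := mK_X+(m-1)Y$.

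The third step identifies these combinatorially. The polytope of $L'$ equals $\{x:\langle x, v_\rho\rangle \geq (m-1)\ord_\Delta(v_\rho)+m\}$, which is precisely the set of points lying strictly inside every supporting hyperplane of $(m-1)F(\Delta)$; hence its lattice points are the interior lattice points of $(m-1)F(\Delta)$. Thus $h^0(X,L') = l^*((m-1)F(\Delta))$ when $F(\Delta)$ is full-dimensional, and $h^0(X,L')=0$ as soon as $k<n$, since a lower-dimensional polytope has no full-dimensional interior. For the degree-one part I would use the standard graded formula $H^{i}(X,\mathcal{O}_X(D))_u \cong \tilde H^{i-1}(V_{D,u})$ for toric cohomology and study the local topology of the negative region $V_{D,u}$ for $u$ ranging over the lattice points of $(m-1)F(\Delta)$. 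The dimension $k=\dim F(\Delta)$ enters exactly here: the codimension $c=n-k$ counts the normal directions simultaneously tight on $F(\Delta)$, and these govern whether $V_{D,u}$ is disconnected. For $c=1$ the region splits into two components for each $u$ in the relative interior, giving $\tilde H^0(V_{D,u})=\mathbb{C}$ and hence $h^1(X,L')= l^*((m-1)F(\Delta))$; for $c\ge 2$ the region stays connected and $h^1(X,L')=0$; and for $c=0$ one gets $h^1(X,L')=0$ directly.

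Assembling the cases $k=n$, $k=n-1$ and $k<n-1$ then yields the stated formula. I expect the main obstacle to be this degree-one cohomology analysis: both the exact non-vanishing of $h^1(X,L')$ governed by $c=n-k$, and the bookkeeping forced by the singularities of the minimal model. Since $K_Y$ is in general only $\mathbb{Q}$-Cartier, the sequence and the lattice-point identifications must be carried out with reflexive sheaves (or first for those $m$ with $mK_Y$ Cartier, then argued uniformly in $m$), and one must verify that the constructed $Y$ genuinely gives the minimal model so that $K_X+Y$ is nef and Demazure vanishing is applicable.
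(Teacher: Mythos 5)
Your skeleton is the same as the paper's: you work on the same ambient toric variety (fan refining $\Sigma_{\tilde\Delta}$ with rays the support $S_F(\Delta)$, so that $K_{\mathbb{P}}+Y$ is nef with divisor polytope $F(\Delta)$), you use the same ideal-sheaf/adjunction sequence twisted by $m(K_{\mathbb{P}}+Y)$, and you kill $H^1(\mathbb{P},m(K_{\mathbb{P}}+Y))$ by Demazure vanishing, reducing everything to $h^0$ and $h^1$ of $L'=(m-1)(K_{\mathbb{P}}+Y)+K_{\mathbb{P}}$. Where you diverge is in evaluating these two groups. The paper applies torus-graded Serre duality, $H^{n-p}(\mathbb{P},\mathcal{O}(K_{\mathbb{P}}+D))_u \cong \bigl(H^{p}(\mathbb{P},\mathcal{O}(-D))_{-u}\bigr)^*$ with $D=(m-1)(K_{\mathbb{P}}+Y)$, and then quotes the Batyrev--Borisov vanishing theorem: $H^p(\mathbb{P},\mathcal{O}(-D))$ is concentrated in degree $p=\dim P_D$ and spanned there by the interior lattice points of $P_D=(m-1)F(\Delta)$. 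This packages your entire case analysis ($k=n$, $k=n-1$, $k<n-1$) into one citation. You instead compute $h^0(L')$ directly from the divisor polytope and propose to compute $h^1(L')$ by hand via the graded formula $H^i(\mathbb{P},\mathcal{O}(D))_u\cong \tilde H^{i-1}(V_{D,u})$. Your $h^0$ identification is correct, but note the point that makes it work: $F(\Delta)$ is only a rational polytope, and ``shift the inequality by $1$ $=$ strict inequality for lattice points'' would be false for general rational supporting levels; it holds here precisely because for $\nu\in S_F(\Delta)$ one has $\ord_{F(\Delta)}(\nu)=\ord_\Delta(\nu)+1\in\mathbb{Z}$, so all relevant levels of $(m-1)F(\Delta)$ are integers. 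That one sentence should appear in your argument.

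The genuine gap is the $h^1$ analysis, which you yourself flag as the main obstacle: as written it is a plan, not a proof, and completing it amounts to reproving the relevant case of Batyrev--Borisov vanishing. Concretely you must (i) control $H^1(\mathbb{P},L')_u$ for \emph{all} $u\in M$, not only for $u$ a lattice point of $(m-1)F(\Delta)$; (ii) in the case $k=n-1$, actually prove the two-component claim, which requires identifying the rays tight at a relative-interior $u$ (essentially that both primitive normals $\pm\nu_0$ to the affine span of $F(\Delta)$ contribute to the negative region and are not joined inside any cone) and showing all other degrees contribute nothing; (iii) for $k\le n-2$ prove connectedness (or emptiness) of $V_{L',u}$ for every $u$; and (iv) treat $m=1$ separately, since then $(m-1)F(\Delta)=\{0\}$ and the negative region at $u=0$ is the whole fan, which is connected --- so the ``one class per relative-interior point'' heuristic breaks down exactly there (the paper also isolates $m=1$, getting $P_1=l(F(\Delta))$ via $h^1(\mathbb{P},K_{\mathbb{P}})=h^{n-1}(\mathbb{P},\mathcal{O}_{\mathbb{P}})=0$). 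All of these are true statements and your method (the graded cohomology of toric divisors) can establish them, but the most economical repair is the paper's: dualize $L'$ via toric Serre duality and cite Batyrev--Borisov vanishing, which is exactly the theorem your steps (i)--(iii) would reconstruct. Finally, your opening reduction via birational invariance of plurigenera should be stated for varieties with canonical (or terminal) singularities --- plurigenera are not birational invariants of arbitrary singular varieties --- though for minimal models this is standard and unproblematic.
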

Here for $F \subset M_{\mathbb{R}}$ a rational polytope $l(F) := \text{\textbar}F \cap M \text{\textbar}$ is the number of lattice point of $F$ and $l^*(F)$ the number of interior lattice points of $F$. 
\\
In the article (\cite{Bat22}) it is shown how to construct minimal models and other birational models with mild singularities of nondegenerate toric hypersurfaces. That is we start with a nondegenerate Laurent polynomial $f$ with $n$-dimensional Newton polytope $\Delta \subset M_{\mathbb{R}}$, consider the zero set $Z_f := \{f=0\} \subset (\mathbb{C}^*)^n$ and ask for a projective toric variety $\mathbb{P}$ such that the closure $Y=Y_f$ of $Z_f$ in $\mathbb{P}$ has at most terminal singularities and nef canonical divisor $K_{Y}$. In this case we call $Y$ a \textit{minimal model} of $Z_f$.
\\ \\
$\mathbb{P}$ exists if $F(\Delta) \neq \emptyset$ and in (\cite{Bat22}) it was shown how to construct $\mathbb{P}$ combinatorially. To compute the plurigenera of $Y$ in section \ref{section_plurigenera} we use an exact sequence arising from the adjunction formula 
\[ 0 \rightarrow \mathcal{O}_{\mathbb{P}}((m-1)(K_{\mathbb{P}} + Y) + K_{\mathbb{P}}) \rightarrow \mathcal{O}_{\mathbb{P}}(m(K_{\mathbb{P}} + Y)) \rightarrow \mathcal{O}_Y(mK_Y)  \rightarrow 0. \]
We exploit that the toric divisor $m(K_{\mathbb{P}}+ Y)$ is $\mathbb{Q}$-Cartier and nef, and the polytope associated to it is $m \cdot F(\Delta)$. Then we apply Serre duality on $\mathbb{P}$ and a vanishing Theorem to deduce the formula on the plurigenera.
\newline \newline
In (\cite{Fle13}) and (\cite{Rei87}) the Plurigenera are computed for $3$-folds, which are not necessarily toric hypersurfaces: If $Y$ is a projective $3$-fold with at most canonical singularities and with $K_Y$ ample, then we have for $m \geq 2$ (\cite[17.3]{Fle13})
\begin{align*}
P_m(Y) &= \chi(Y, \mathcal{O}_Y(mK_Y)) \\
&= \frac{(2m-1)m(m-1)}{12r} K_Y^3 - (2m-1) \chi(Y,\mathcal{O}_Y) + l(n).
\end{align*}
Here the numbers $r$ and $l(n)$ depend on the singularities of $Y$ and a computation of them is more involved. In (\cite{DK86}) it is shown that if $\mathbb{P}_{\Delta}$ is $\mathbb{Q}$-factorial the first plurigenus of the closure $Z_{\Delta}$ in $\mathbb{P}_{\Delta}$ is given by $l^*(\Delta)$, which equals $l(F(\Delta))$. Our formula in Theorem \ref{theorem_intro_plurigenera} is valid in aribtrary dimensions $n \geq 2$ and should be much more useful for computations. As an illustration of our formula we deal with smooth curves in toric surfaces. 
\\ \\
In Corollary \ref{corollary_volume_of_can_divisor} we derive from Theorem \ref{theorem_intro_plurigenera} a formula for the volume $K_Y^{n-1}$ of $K_Y$, which though was first and independently found by V. Batyrev. If $\dim \, F(\Delta) \in \{n-1,n\}$ then $Y$ is a variety of general type. In Corollary \ref{theorem_can_model_surf_gen_type} we show how to construct a \textit{canonical model} of $Y$ at least if $\dim \, F(\Delta) = n$. Finally we consider the pluricanonical mappings $\psi_{mK_Y}$ on $Y$ and give criteria when they are morphisms or birational, where for the latter we assume $\dim \, F(\Delta) = n$.

\section{Background on toric varieties} \label{section_background_on_tori_var}

In this article $M$ always denotes an $n$-dimensional lattice $\mathbb{Z}^n$ with dual lattice $N$. We write $M_{\mathbb{R}}$ for $M \otimes \mathbb{R}$. Let $T := N \otimes_{\mathbb{Z}} \mathbb{C}^* \cong (\mathbb{C}^*)^n$ be the $n$-dimensional torus. By $\Delta$ we denote a lattice polytope in $M_{\mathbb{R}}$ and think of it as the Newton polytope of some Laurent polynomial $f$, that is
\begin{align} \label{representation_Laurent_polynomial} 
	f = \sum\limits_{m \in M \cap \Delta} a_m x^m, \quad a_m \in \mathbb{C},
\end{align}
where $a_m \neq 0$ for $m$ a vertex of $\Delta$. Here $x^m$ is an abbreviation for $x_1^{m_1} \cdot ... \cdot x_n^{m_n}$. \\
By a rational polytpe $F \subset M_{\mathbb{R}}$ we mean a polytope, whose vertices have coordinates in $\mathbb{Q}$. We may represent a rational polytope $F$ as
\[ F = \{x \in M_{\mathbb{R}} \mid \langle x, \nu_i \rangle \geq -a_i, \, i=1,...,r  \},  \]
where $\nu_i \in N$ are primitive and $a_i \in \mathbb{Q}$. We associate to $F$ its normal fan $\Sigma_{F}$ which has as rays $\Sigma_{F}[1] = \{ \nu_1,...,\nu_r \}$. For $\Delta$ an $n$-dimensional lattice polytope or more generally a rational polytope let
\[ \ord_{\Delta}(\nu):= \min\limits_{m \in \Delta} \langle m, \nu \rangle \quad \nu \in N,  \]
such that $\ord_{\Delta}(\nu_i) = -a_i$. We denote the projective toric variety associated to the normal fan $\Sigma_F$ of $F$ by $\mathbb{P}_F$. We denote the toric divisor associated to $\nu_i$ by $D_i$.
\\ \\
For an $n$-dimensional lattice polytope $\Delta$ we denote by $L(\Delta)$ the vector space over $\mathbb{C}$ with basis the lattice points of $\Delta$ and by $L^{*}(\Delta)$ the vector space over $\mathbb{C}$ with basis the interior lattice points of $\Delta$, where for the latter $\Delta$ is considered as a subset of an affine space of the same dimension as $\Delta$. The dimensions of $L(\Delta)$ and $L^{*}(\Delta)$ are denoted by $l(\Delta)$ and $l^{*}(\Delta)$.

\begin{remark}
	\normalfont
	We write $D \sim_{lin} D'$ for the linear equivalence of Weil divisors. To a Cartier (Weil) divisor $D$ on an algebraic variety $Y$ is associated a locally free sheaf (rank $1$ reflexive sheaf) $\mathcal{O}_Y(D)$, such that $\mathcal{O}_Y(D)$ is isomorphic to $\mathcal{O}_Y(D')$ if and only if $D$ is linear equivalent to $D'$ (see \cite[App. to §1]{Rei79}).
\end{remark}

\begin{proposition}
	A projective toric variety $V$ to a fan $\Sigma$ is $\mathbb{Q}$-factorial if and only if each cone $\sigma \in \Sigma$ is simplicial, that is the generators $\nu_i \in \sigma[1]$ are linearly independent over $\mathbb{R}$.
\end{proposition}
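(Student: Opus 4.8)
The plan is to reduce the global geometric condition of $\mathbb{Q}$-factoriality to a purely linear-algebraic condition on each cone. First I would recall that on a toric variety every Weil divisor class contains a torus-invariant representative $D = \sum_i a_i D_i$, so $V$ is $\mathbb{Q}$-factorial precisely when every such $D$ is $\mathbb{Q}$-Cartier. Being $\mathbb{Q}$-Cartier is a local condition controlled by the maximal cones: $D$ is Cartier in a neighbourhood of the orbit of a cone $\sigma$ exactly when there is a character $m_\sigma \in M$ with $\langle m_\sigma, \nu_i \rangle = -a_i$ for all $\nu_i \in \sigma[1]$, and it is $\mathbb{Q}$-Cartier when such an $m_\sigma$ exists already in $M_{\mathbb{R}}$ (equivalently $M_{\mathbb{Q}}$).

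Thus the key step is the following: for each maximal cone $\sigma$, the linear map $\phi_\sigma \colon M_{\mathbb{R}} \to \mathbb{R}^{\sigma[1]}$, $m \mapsto (\langle m, \nu_i \rangle)_{\nu_i \in \sigma[1]}$, must be surjective in order that every right-hand side $(a_i)$ be solvable. I would then observe that $\phi_\sigma$ is surjective if and only if its dual (transpose) map $\mathbb{R}^{\sigma[1]} \to N_{\mathbb{R}}$, $(c_i) \mapsto \sum_i c_i \nu_i$, is injective, which is to say that the generators $\nu_i \in \sigma[1]$ are linearly independent over $\mathbb{R}$ --- exactly the definition of $\sigma$ being simplicial. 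This yields both implications simultaneously: if every maximal cone is simplicial then every torus-invariant $D$ is $\mathbb{Q}$-Cartier, while a non-simplicial cone admits a tuple $(a_i)$ outside the image of $\phi_\sigma$, and the associated invariant divisor $D = \sum_i a_i D_i$ then fails to be $\mathbb{Q}$-Cartier at the corresponding fixed point.

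Finally I would record that it suffices to test the maximal cones: every cone of $\Sigma$ is a face of a maximal one, and every face of a simplicial cone is again simplicial, so the conditions ``each cone of $\Sigma$ is simplicial'' and ``each maximal cone is simplicial'' coincide. The main point needing care is not any computation --- the linear algebra is elementary --- but the bookkeeping of the reduction: justifying that $\mathbb{Q}$-factoriality may be tested on torus-invariant divisors and that $\mathbb{Q}$-Cartierness is detected by the rational solvability of the local systems on maximal cones. Here I would use that $V$ is projective, hence its rays span $N_{\mathbb{R}}$, so that the exact sequence $0 \to M \to \mathbb{Z}^{\Sigma[1]} \to \Cl(V) \to 0$ holds and confirms that the invariant divisors capture the entire class group, making the reduction clean.
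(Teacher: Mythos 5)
Your argument is correct, and it is the standard one (it is essentially Cox--Little--Schenck, Prop.\ 4.2.7): reduce to torus-invariant divisors via the class group sequence, characterize local $\mathbb{Q}$-Cartier data by solvability of $\langle m_\sigma, \nu_i\rangle = -a_i$ over $M_{\mathbb{Q}}$ on each maximal cone, and identify surjectivity of $\phi_\sigma$ with linear independence of the rays by duality, noting that faces of simplicial cones are simplicial so testing maximal cones suffices. The paper states this proposition purely as background and gives no proof of its own, so there is nothing to contrast your route with; your write-up fills in exactly the standard justification the paper leaves implicit.
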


\begin{proposition}
	Given two $n$-dimensional complete fans $\Sigma$ and $\Sigma'$ with associated toric varieties $V$ and $V'$, such that $\Sigma[1]$ and $\Sigma'[1]$ belong to the same lattice $N$ and $\Sigma'$ refines $\Sigma$ there is an induced birational morphism $p: V' \rightarrow V$.	
\end{proposition}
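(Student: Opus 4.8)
The plan is to invoke the standard dictionary between toric morphisms and compatible maps of fans, taking the lattice map to be the identity. Recall that a $\mathbb{Z}$-linear map $\bar{\phi} \colon N' \to N$ induces a morphism of associated toric varieties precisely when it is compatible with the fans, meaning that for every cone $\sigma' \in \Sigma'$ there is a cone $\sigma \in \Sigma$ with $\bar{\phi}_{\mathbb{R}}(\sigma') \subseteq \sigma$. Since by hypothesis $\Sigma$ and $\Sigma'$ live in the same lattice $N$, I would take $N' = N$ and $\bar{\phi} = \mathrm{id}_N$, so that the whole construction reduces to verifying this compatibility and then assembling the morphism from its affine charts.

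First I would unwind the refinement hypothesis. That $\Sigma'$ refines $\Sigma$ means the two fans have the same support and that every cone of $\Sigma'$ is contained in some cone of $\Sigma$; as both fans are complete their common support is all of $N_{\mathbb{R}}$. In particular, for each $\sigma' \in \Sigma'$ one may choose $\sigma \in \Sigma$ with $\sigma' \subseteq \sigma$, which is exactly the compatibility condition above for $\bar{\phi} = \mathrm{id}_N$.

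Next I would build the morphism locally. The inclusion $\sigma' \subseteq \sigma$ dualizes to $\sigma^{\vee} \subseteq (\sigma')^{\vee}$, giving an inclusion of additive semigroups $\sigma^{\vee} \cap M \hookrightarrow (\sigma')^{\vee} \cap M$ and hence a $\mathbb{C}$-algebra homomorphism $\mathbb{C}[\sigma^{\vee} \cap M] \to \mathbb{C}[(\sigma')^{\vee} \cap M]$. Applying $\mathrm{Spec}$ produces affine morphisms $p_{\sigma'} \colon U_{\sigma'} \to U_{\sigma} \subseteq V$. To glue these into a single $p \colon V' \to V$ I would check that on each overlap $U_{\sigma_1'} \cap U_{\sigma_2'}$ the two morphisms $p_{\sigma_1'}$ and $p_{\sigma_2'}$ agree; this holds because both restrict to the same map on the dense torus and $V$ is separated.

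Finally, for birationality I would use the trivial cone $\{0\}$, which lies in both fans and whose chart is the common torus $T = N \otimes_{\mathbb{Z}} \mathbb{C}^{*}$, open and dense in both $V$ and $V'$. Because $\bar{\phi} = \mathrm{id}_N$, the map $p$ restricts on $T$ to the identity, and an isomorphism on a dense open subset makes $p$ birational. I expect the one genuinely delicate point to be the gluing above: the cone $\sigma \in \Sigma$ containing a given $\sigma'$ need not be unique, so the affine maps $p_{\sigma'}$ are a priori choice-dependent. This is exactly where separatedness of $V$ together with the density of $T$ does the work, since any two morphisms out of an irreducible variety that agree on $T$ must coincide, so the construction is both well defined and independent of the chosen $\sigma$.
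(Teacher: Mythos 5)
Your proof is correct, and it is the standard argument from the toric dictionary (identity lattice map, compatibility of fans via refinement, dualizing cone inclusions to semigroup algebra maps, gluing via separatedness, birationality from the common dense torus). Note that the paper itself states this proposition as background without any proof — it is a standard fact, found e.g. in Cox--Little--Schenck — so there is no argument in the paper to compare against; your write-up, including the careful remark that the choice of $\sigma \supseteq \sigma'$ is immaterial because morphisms agreeing on the dense torus coincide, fills that gap completely and correctly.
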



To every Weil divisor 
\[ D = \sum\limits_{i = 1}^r a_i D_i, \quad a_i \in \mathbb{Z} \] 
on a toric variety $V$ to a complete fan $\Sigma$ is associated a polytope
\begin{align} \label{formula_polytope_assoc_to_divisor} 
P_D := \{ x \in M_{\mathbb{R}} \mid \langle x, \nu_i \rangle \geq -a_i, \quad \nu_i \in \Sigma[1] \},
\end{align}
which is a rational polytope, and which computes the global sections of $D$, that is (compare \cite[Prop.4.3.3]{CLS11})
\[ H^0(V, \mathcal{O}_{V}(D)) \cong  L(P_D). \]
Note that for $k \geq 1$ the polytope $P_{kD}$ associated to $k \cdot D$ equals $k \cdot P_D$.

\subsection{Nondegenerate hypersurfaces in toric varieties} \label{subsection_nondegenerate_hyp_tor_var}

Given a Laurent polynomial $f$ with Newton polytope $\Delta$, we denote the zero set in the torus $T$ by $Z_f$. For $F \subset M_{\mathbb{R}}$ a rational polytope we denote the closure of $Z_f$ in $\mathbb{P}_F$ by $Z_F$. 
\begin{definition}
	Given a Laurent polynomial $f$ with Newton polytope $\Delta$ we call $f$ nondegenerate with respect to $\Delta$ (or $\Delta$-regular) if $Z_f$ is smooth and $Z_{\Delta}$ intersects the toric strata of $\mathbb{P}_{\Delta}$ transversally. In this situation we sometimes also call $Z_{\Delta}$ nondegenerate.
\end{definition}

\begin{remark}
	\normalfont
By Bertini's Theorem the condition for $f \in L(\Delta)$ to be $\Delta$-regular is a Zariski open condition on the coefficients $(a_m)_{m \in M \cap \Delta}$.
\end{remark}

\begin{construction} \label{construction_lin_equiv_class_hyp}
	\normalfont
By (\cite[Prop.7.1]{Bat22}) for $f \in L(\Delta)$ the closure $Z$ in the toric variety $V$ to an $n$-dimensional complete fan $\Sigma$ is as Weil divisor linear equivalent to
\begin{align} \label{formula_lin_equiv_class_hyp}
	Z \sim_{lin} - \sum\limits_{\nu_i \in \Sigma[1]} \ord_{\Delta}(\nu_i) D_i.
\end{align} 
In particular the linear equivalence class of $Z$ is independent of $f$. This justifies our notation without mentioning $f$. \\
$Z$ is a Cartier divisor if and only if $\ord_{\Delta}$ is a \textit{support function}, that is
\[ \ord_{\Delta}: N_{\mathbb{R}} \rightarrow \mathbb{R} \]
is linear on each cone of $\Sigma$ and $\ord_{\Delta}(N) \subset \mathbb{Z}$. Similarly $Z$ is $\mathbb{Q}$-Cartier if we just have $\ord_{\Delta}(N) \subset \mathbb{Q}$.
\end{construction}

\section{Minimal models of toric hypersurfaces} \label{section_min_mod_of_tor_hyp}

\subsection{The Fine interior $F(\Delta)$}

In the article (\cite{Bat22}) it is shown how to construct minimal models of nondegenerate toric hypersurfaces. Since we will heavily exploit these constructions, we recall them here:

\begin{definition} \label{definition_Fine_interior}
Given an $n$-dimensional lattice polytope $\Delta \subset M_{\mathbb{R}}$ with presentation
\[ \{ x \in M_{\mathbb{R}} \mid \langle x, \nu_i \rangle \geq \ord_{\Delta}(\nu_i), \, \nu_i \in \Sigma_{\Delta}[1] \}, \]
we define the Fine interior $F(\Delta)$ of $\Delta$ as
\[ F(\Delta) := \{ x \in M_{\mathbb{R}} \mid \langle x, \nu \rangle \geq \ord_{\Delta}(\nu) + 1, \, \nu \in N \setminus \{0\} \}, \]	
\end{definition}

	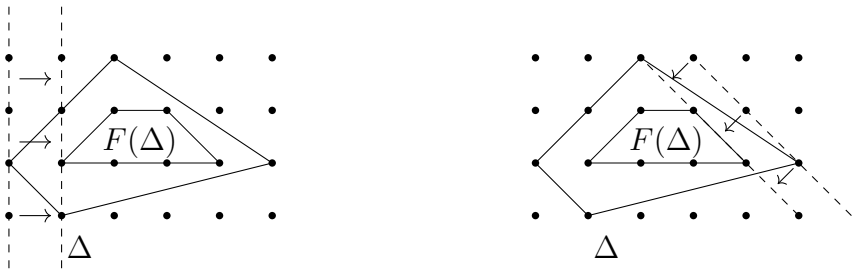
\begin{figure}[H]
	\begin{tikzpicture}[scale=0.7]
	
	\fill (-1,1) circle(2pt);
	\fill (-1,2) circle(2pt);
	\fill (-1,3) circle(2pt);
	\fill (-1,4) circle(2pt);
	\fill (0,1) circle (2pt);
	\fill (0,2) circle (2pt);	
	\fill (0,3) circle (2pt);
	\fill (0,4) circle (2pt);	
	\fill (1,1) circle (2pt);
	\fill (1,2) circle (2pt);	
	\fill (1,3) circle (2pt);
	\fill (1,4) circle (2pt);	
	\fill (2,1) circle (2pt);
	\fill (2,2) circle (2pt);	
	\fill (2,3) circle (2pt);
	\fill (2,4) circle (2pt);	
	\fill (3,1) circle (2pt);
	\fill (3,2) circle (2pt);	
	\fill (3,3) circle (2pt);
	\fill (3,4) circle (2pt);	
	\fill (4,1) circle (2pt);
	\fill (4,2) circle (2pt);	
	\fill (4,3) circle (2pt);
	\fill (4,4) circle (2pt);
	
	\draw (-1,2) -- (1,4);
	\draw (1,4) -- (4,2);
	\draw (-1,2)  -- (0,1);
	\draw (0,1) -- (4,2);
	
	\draw[very thin] (0,2)--(1,3);
	\draw[very thin] (1,3) -- (2,3);
	\draw[very thin] (2,3) -- (3,2);
	\draw[very thin] (3,2) -- (0,2);
	
	\draw[dashed] (-1,0) -- (-1,5);	
	\draw[arrows=->](-0.8,1)--(-0.2,1);
	\draw[arrows=->](-0.8,2.4)--(-0.2,2.4);
	\draw[arrows=->](-0.8,3.6)--(-0.2,3.6);
	\draw[dashed] (0,0) -- (0,5);

	\node [left] at (2.4,2.4) {{\bf $F(\Delta)$ }};
		\node [left] at (0.8,0.4) {{\bf $\Delta$}};
	
	\begin{scope}[xshift = 10cm]
	
	\fill (-1,1) circle(2pt);
	\fill (-1,2) circle(2pt);
	\fill (-1,3) circle(2pt);
	\fill (-1,4) circle(2pt);
	\fill (0,1) circle (2pt);
	\fill (0,2) circle (2pt);	
	\fill (0,3) circle (2pt);
	\fill (0,4) circle (2pt);	
	\fill (1,1) circle (2pt);
	\fill (1,2) circle (2pt);	
	\fill (1,3) circle (2pt);
	\fill (1,4) circle (2pt);	
	\fill (2,1) circle (2pt);
	\fill (2,2) circle (2pt);	
	\fill (2,3) circle (2pt);
	\fill (2,4) circle (2pt);	
	\fill (3,1) circle (2pt);
	\fill (3,2) circle (2pt);	
	\fill (3,3) circle (2pt);
	\fill (3,4) circle (2pt);	
	\fill (4,1) circle (2pt);
	\fill (4,2) circle (2pt);	
	\fill (4,3) circle (2pt);
	\fill (4,4) circle (2pt);
	
	\draw (-1,2) -- (1,4);
	\draw (1,4) -- (4,2);
	\draw (-1,2)  -- (0,1);
	\draw (0,1) -- (4,2);
	
	\draw[very thin] (0,2)--(1,3);
	\draw[very thin] (1,3) -- (2,3);
	\draw[very thin] (2,3) -- (3,2);
	\draw[very thin] (3,2) -- (0,2);
	
	\draw[dashed] (5,1) -- (2,4);	
	\draw[arrows=->](3.9,1.9)--(3.6,1.6);
	\draw[arrows=->](2.9,2.9)--(2.6,2.6);
	\draw[arrows=->](1.9,3.9)--(1.6,3.6);
	\draw[dashed] (4,1) -- (1,4);	
	
	\node [left] at (2.4,2.4) {{\bf $F(\Delta)$ }};
	\node [left] at (0.8,0.4) {{\bf $\Delta$}};

	\end{scope}
	\end{tikzpicture}
	\caption{Illustration of the construction of the Fine interior $F(\Delta)$ from $\Delta$.} \label{figure_examples_polytopes_Fine_interior}
\end{figure}

\begin{remark} \label{Remark_Jon_Fine_2_dim_Fine_interior}
	\normalfont
	The Fine interior was introduced by J. Fine in (\cite{Fine83}). In general it is only a rational polytope although if $\dim \, \Delta= 2$ it is always a lattice polytope, namely it equals the convex span of the interior lattice points of $\Delta$ (\cite[Prop.2.9]{Bat17}).
\end{remark}

\begin{definition}
	\normalfont
	The set of lattice points $\nu \in N \setminus \{0\}$ with
	\[ \ord_{F(\Delta)}(\nu) = \ord_{\Delta}(\nu) + 1 \]
	is called the support $S_F(\Delta)$ of $F(\Delta)$ to $\Delta$.
\end{definition}

\subsection{A diagram of toric morphisms}

There is another polytope $C(\Delta)$ which might be slightly larger than $\Delta$. $C(\Delta)$ has the nice property that all normal vectors to facets of $C(\Delta)$ belong to $S_F(\Delta)$.

\begin{definition}
	\normalfont
	The polytope
	\[ C(\Delta) := \{ x \in M_{\mathbb{R}} \mid \langle x,\nu \rangle \geq \ord_{\Delta}(\nu) \quad  \forall \, \nu \in S_{F}(\Delta) \} \]
	is called the canonical closure of $\Delta$. We call $\Delta$ canonically closed if $C(\Delta) = \Delta$. 
\end{definition}

\begin{lemma} \label{lemma_canonical_closure_characterization} (\cite[Cor.3.19, Prop.4.3]{Bat22} \\
	An $n$-dimensional lattice polytope $\Delta$ with $F(\Delta) \neq \emptyset$ is canonically closed if and only if $\Sigma_{\Delta}[1] \subset S_F(\Delta)$. $C(\Delta)$ is canonically closed.
\end{lemma}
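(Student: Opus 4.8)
The plan is to isolate a few elementary properties of the order function and of the support $S_F(\Delta)$, and then settle both assertions by the standard fact that every facet of a full-dimensional polytope is supported by one of the half-spaces cutting it out. First I would record the monotonicity and homogeneity of $\ord$: if $P \subseteq Q$ then $\ord_P(\nu) \geq \ord_Q(\nu)$, and $\ord_P(k\nu) = k\,\ord_P(\nu)$ for $k > 0$. Since $F(\Delta) \subseteq \Delta \subseteq C(\Delta)$ (the last inclusion because every point of $\Delta$ already satisfies $\langle x,\nu\rangle \geq \ord_\Delta(\nu)$), these give $\ord_{F(\Delta)}(\nu) \geq \ord_\Delta(\nu)+1$ for all $\nu \neq 0$, as well as $\ord_{C(\Delta)}(\nu) \leq \ord_\Delta(\nu)$; combining the latter with the defining inequalities of $C(\Delta)$ yields $\ord_{C(\Delta)}(\nu) = \ord_\Delta(\nu)$ for every $\nu \in S_F(\Delta)$. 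The decisive auxiliary claim is that every $\nu \in S_F(\Delta)$ is \emph{primitive}: writing $\nu = k\nu_0$ with $\nu_0$ primitive and $k \geq 1$, homogeneity turns $\ord_{F(\Delta)}(\nu) = \ord_\Delta(\nu)+1$ into $k\,\ord_{F(\Delta)}(\nu_0) = k\,\ord_\Delta(\nu_0)+1$, which together with $\ord_{F(\Delta)}(\nu_0) \geq \ord_\Delta(\nu_0)+1$ forces $k=1$.

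For the equivalence, the implication $\Sigma_\Delta[1] \subseteq S_F(\Delta) \Rightarrow C(\Delta)=\Delta$ is immediate: the inequalities defining $\Delta$ then occur among those defining $C(\Delta)$, giving $C(\Delta) \subseteq \Delta$, and the reverse inclusion is general. For the converse, assume $C(\Delta) = \Delta$ and let $\nu_i \in \Sigma_\Delta[1]$ be a primitive facet normal. A relative interior point of the corresponding facet lies on the boundary of $C(\Delta) = \bigcap_{\nu \in S_F(\Delta)}\{\langle x,\nu\rangle \geq \ord_\Delta(\nu)\}$, so the facet is supported by some active $\mu \in S_F(\Delta)$ with $\mu$ a positive integer multiple of $\nu_i$; by the primitivity claim $\mu$ is itself primitive, whence $\mu = \nu_i \in S_F(\Delta)$.

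For the canonical closure I would first prove $F(C(\Delta)) = F(\Delta)$. The inclusion $F(C(\Delta)) \supseteq F(\Delta)$ follows from $\ord_{C(\Delta)} \leq \ord_\Delta$, while $F(C(\Delta)) \subseteq F(\Delta)$ rests on the structural identity $F(\Delta) = \bigcap_{\nu \in S_F(\Delta)}\{\langle x,\nu\rangle \geq \ord_\Delta(\nu)+1\}$; this holds because every facet normal of $F(\Delta)$ lies in $S_F(\Delta)$ (again via primitivity), so intersecting over $N\setminus\{0\}$, over $S_F(\Delta)$, and over the facet normals of $F(\Delta)$ all produce the same polytope. Granting this, for $\nu \in S_F(\Delta)$ one has $\ord_{F(C(\Delta))}(\nu) = \ord_{F(\Delta)}(\nu) = \ord_\Delta(\nu)+1 = \ord_{C(\Delta)}(\nu)+1$, so $S_F(\Delta) \subseteq S_F(C(\Delta))$. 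Then $C(\Delta) \subseteq C(C(\Delta))$ is general, and conversely any $x \in C(C(\Delta))$ satisfies, for each $\nu \in S_F(\Delta) \subseteq S_F(C(\Delta))$, the inequality $\langle x,\nu\rangle \geq \ord_{C(\Delta)}(\nu) = \ord_\Delta(\nu)$; these are exactly the constraints cutting out $C(\Delta)$, so $x \in C(\Delta)$ and $C(C(\Delta)) = C(\Delta)$.

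The main obstacle is the structural identity $F(\Delta) = \bigcap_{\nu \in S_F(\Delta)}\{\langle x,\nu\rangle \geq \ord_\Delta(\nu)+1\}$ underlying $F(C(\Delta)) = F(\Delta)$: it is what guarantees that enlarging $\Delta$ to its canonical closure leaves the Fine interior unchanged, and its verification is precisely where the primitivity of the elements of $S_F(\Delta)$ enters. Everything else reduces to the monotonicity and homogeneity of $\ord$ together with the elementary fact that a facet is supported by one of the defining half-spaces.
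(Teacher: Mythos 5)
The paper does not actually prove this lemma; it imports it from \cite[Cor.3.19, Prop.4.3]{Bat22}, so there is no in-paper argument to compare against, and your proposal has to be judged on its own merits. Its overall architecture is sound and the intermediate claims are true: monotonicity and homogeneity of $\ord$, the inclusions $F(\Delta) \subseteq \Delta \subseteq C(\Delta)$, the identity $\ord_{C(\Delta)}(\nu) = \ord_{\Delta}(\nu)$ for $\nu \in S_F(\Delta)$, the primitivity of elements of $S_F(\Delta)$ (a genuinely nice observation, correctly proved), the easy direction of the equivalence, and the deduction of $C(C(\Delta)) = C(\Delta)$ from $F(C(\Delta)) = F(\Delta)$ together with $S_F(\Delta) \subseteq S_F(C(\Delta))$.

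There is, however, a genuine gap where you lean on ``the standard fact that every facet of a full-dimensional polytope is supported by one of the half-spaces cutting it out.'' That fact is standard only for \emph{finite} systems of half-spaces, and it fails for infinite ones: in $\mathbb{R}^1$ one has $\bigcap_{k \geq 1}\{-1 \leq x \leq 1/k\} = [-1,0]$, and the facet $\{0\}$ is supported by none of the defining half-spaces. But $F(\Delta)$ is cut out by infinitely many inequalities (one for each $\nu \in N \setminus \{0\}$), and you never establish that $S_F(\Delta)$ is finite or that the system is effectively finite. This hole sits under both of your key steps: the hard direction of the equivalence (a relative-interior point of a facet of $\Delta = C(\Delta)$ must have an \emph{active} constraint $\mu \in S_F(\Delta)$ --- false in general for infinite systems) and the structural identity $F(\Delta) = \bigcap_{\nu \in S_F(\Delta)}\{\langle x,\nu\rangle \geq \ord_{\Delta}(\nu)+1\}$. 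The repair is a discreteness argument: since $F(\Delta)$ is compact and contained in the interior of $\Delta$, there is $\delta > 0$ with $\ord_{F(\Delta)}(\nu) - \ord_{\Delta}(\nu) \geq \delta \lVert\nu\rVert$ for all $\nu \in N\setminus\{0\}$, so only the finitely many $\nu$ with $\lVert \nu \rVert \leq 1/\delta$ can be non-redundant or lie in $S_F(\Delta)$. This finiteness is itself a lemma in \cite{Bat22}, not a fact one may wave at.

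A second, related gap: your justification of the structural identity goes through facet normals of $F(\Delta)$ and therefore tacitly assumes $\dim F(\Delta) = n$. The lemma only assumes $F(\Delta) \neq \emptyset$, and the low-dimensional cases matter (the paper's main theorem explicitly distinguishes $\dim F(\Delta) < n-1$). If, say, $F(\Delta)$ is a point or a segment, the intersection of its facet half-spaces is strictly larger than $F(\Delta)$ (for a point it is all of $M_{\mathbb{R}}$), so your chain ``intersecting over $N\setminus\{0\}$, over $S_F(\Delta)$, and over the facet normals all give the same polytope'' breaks down. Once effective finiteness is in hand, the correct replacement is to delete nowhere-active constraints one at a time: if a constraint has strict slack at every point of the nonempty intersection, a segment argument shows removing it does not change the intersection; what remains are exactly the constraints indexed by $S_F(\Delta)$, in every dimension. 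With these two repairs your proof closes up; without them, the two central steps are unjustified as written.
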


\begin{remark}
	\normalfont
	The canonical closure $C(\Delta)$ is in general again just a rational polytope. In dimension $2$ we have $C(\Delta) = \Delta$ if $F(\Delta) \neq \emptyset$ (\cite[Prop.4.4]{Bat22}).
\end{remark}

\begin{definition} \label{construction_with_the_many_points} 	\normalfont
	Let $\Delta \subset M_{\mathbb{R}}$ be an $n$-dimensional lattice polytope with Fine interior $F(\Delta) \neq \emptyset$. Then we define $\tilde{\Delta}$ as Minkowski sum
	\[ \tilde{\Delta}:= C(\Delta) + F(\Delta). \]
\end{definition}

By (\cite[Thm.6.3]{Bat22}) we still have
\[ \Sigma_{\tilde{\Delta}}[1] \subset S_F(\Delta). \]
Thus we may define a simplicial fan $\Sigma$ with $\Sigma[1] = S_F(\Delta)$ and which refines $\Sigma_{\tilde{\Delta}}$. Let $\mathbb{P}$ be the toric variety to $\Sigma$ and $Y = Y_f$ the closure of $Z_f$ in $\mathbb{P}$. Then we arrive at diagrams of toric morphisms and their restrictions, where by abuse of notation we use the same letters for the maps and its restrictions:

\begin{equation}  \label{diagram_toric_morphisms}
\begin{tikzcd}
& \mathbb{P} \arrow{d}{\pi} \\
& \mathbb{P}_{\tilde{\Delta}} \arrow[swap]{dl}{\rho} \arrow{dr}{\theta} \\
\mathbb{P}_{C(\Delta)} && \mathbb{P}_{F(\Delta)}
\end{tikzcd}
\quad
\begin{tikzcd}
& Y \arrow{d}{\pi} \\
& Z_{\tilde{\Delta}} \arrow[swap]{dl}{\rho} \arrow{dr}{\theta} \\
Z_{C(\Delta)} && Z_{F(\Delta)}
\end{tikzcd}
\end{equation}

Here $\pi$ and $\rho$ are birational and $\theta$ is birational if $\dim \, F(\Delta) = \dim \, \Delta$.

\subsection{Minimal models}

\begin{theorem} (\cite[Cor.6.6]{Bat22}) \\ \label{theorem_min_models}
	Let $\Delta \subset M_{\mathbb{R}}$ be an $n$-dimensional lattice polytope with $F(\Delta) \neq \emptyset$. Then $\mathbb{P}$ has at most terminal singularities and the adjoint divisor $K_{\mathbb{P}} + Y$ is $\mathbb{Q}$-Cartier and nef.	
\end{theorem}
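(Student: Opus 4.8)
The plan is to translate both assertions into combinatorial facts about the support function of $\Sigma$ and then verify them using the Minkowski decomposition $\tilde{\Delta} = C(\Delta) + F(\Delta)$. First I would record the dictionary. By Construction \ref{construction_lin_equiv_class_hyp} we have $Y \sim_{lin} -\sum_{\nu_i \in \Sigma[1]} \ord_{\Delta}(\nu_i) D_i$, while $K_{\mathbb{P}} = -\sum_{\nu_i \in \Sigma[1]} D_i$, so that
\[
K_{\mathbb{P}} + Y \sim_{lin} - \sum_{\nu_i \in \Sigma[1]} \bigl(\ord_{\Delta}(\nu_i) + 1\bigr) D_i .
\]
Since $\Sigma$ is simplicial, $\mathbb{P}$ is $\mathbb{Q}$-factorial, so every Weil divisor, in particular $K_{\mathbb{P}} + Y$, is $\mathbb{Q}$-Cartier. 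By \eqref{formula_polytope_assoc_to_divisor} the associated polytope is
\[
P_{K_{\mathbb{P}} + Y} = \{ x \in M_{\mathbb{R}} \mid \langle x, \nu_i \rangle \geq \ord_{\Delta}(\nu_i) + 1, \ \nu_i \in \Sigma[1] \} = F(\Delta),
\]
the last equality because $\Sigma[1] = S_F(\Delta)$ carries precisely the inequalities that are active in Definition \ref{definition_Fine_interior}.

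For nefness I would compare the support function $\psi$ of $K_{\mathbb{P}} + Y$ with $\ord_{F(\Delta)}$. On each ray $\nu_i \in S_F(\Delta)$ one has $\psi(\nu_i) = \ord_{\Delta}(\nu_i) + 1 = \ord_{F(\Delta)}(\nu_i)$ by the very definition of $S_F(\Delta)$. Because the normal fan of a Minkowski sum is the common refinement of the normal fans of the summands, $\Sigma_{\tilde{\Delta}}$ refines $\Sigma_{F(\Delta)}$; as $\Sigma$ refines $\Sigma_{\tilde{\Delta}}$, the function $\ord_{F(\Delta)}$ is linear on every cone of $\Sigma$. Hence $\psi$ and $\ord_{F(\Delta)}$ agree on the generators of each maximal cone and are both linear there, so they coincide globally. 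Since $\ord_{F(\Delta)} = \min_{m \in F(\Delta)} \langle m, \cdot \rangle$ is a minimum of linear functions, it is concave, which is exactly the convexity of the support function required by the toric nef criterion; thus $K_{\mathbb{P}} + Y$ is nef.

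For terminality I would invoke the combinatorial criterion that a simplicial $X_{\Sigma}$ has at most terminal singularities if and only if for each maximal cone $\sigma$ with primitive generators $u_1, \dots, u_n$ the simplex $\mathrm{conv}(0, u_1, \dots, u_n)$ contains no lattice points besides $0$ and the $u_i$. So suppose $u = \sum_i \lambda_i u_i \in N$ with $\lambda_i \geq 0$, $\sum_i \lambda_i \leq 1$, $u \neq 0$, and $u$ not a generator, aiming for a contradiction. The Fine interior gives $\ord_{F(\Delta)}(u) \geq \ord_{\Delta}(u) + 1$ for every $u \in N \setminus \{0\}$. On the other hand $\ord_{C(\Delta)}$ and $\ord_{F(\Delta)}$ are linear on $\sigma$ (again because $\Sigma$ refines $\Sigma_{\tilde{\Delta}}$), and on the generators $\nu = u_i \in S_F(\Delta)$ one has $\ord_{C(\Delta)}(u_i) = \ord_{\Delta}(u_i)$ and $\ord_{F(\Delta)}(u_i) = \ord_{\Delta}(u_i) + 1$. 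Linear interpolation and $C(\Delta) \supseteq \Delta$ then yield
\[
\ord_{F(\Delta)}(u) = \ord_{C(\Delta)}(u) + \sum_i \lambda_i \leq \ord_{\Delta}(u) + 1 ,
\]
so $\ord_{F(\Delta)}(u) = \ord_{\Delta}(u) + 1$, i.e. $u \in S_F(\Delta) = \Sigma[1]$. But a lattice point lying in $\sigma$ and on a ray of the fan $\Sigma$ must be proportional to one of the $u_i$, and together with $\sum_i \lambda_i \leq 1$ this forces $u = u_i$, a contradiction. Hence no such $u$ exists and $\mathbb{P}$ is terminal.

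I expect the main obstacle to be the terminal part, and within it the two supporting facts that make the interpolation argument run: the Minkowski-sum normal-fan refinement statement guaranteeing that $\ord_{C(\Delta)}$, $\ord_{F(\Delta)}$ are linear on each cone of $\Sigma$, and the identities $\ord_{C(\Delta)}(\nu) = \ord_{\Delta}(\nu)$, $\ord_{F(\Delta)}(\nu) = \ord_{\Delta}(\nu)+1$ on $S_F(\Delta)$. One must also be careful to state the terminal criterion correctly in the merely $\mathbb{Q}$-Gorenstein setting, where $\ord_{\Delta}$ need not be integral, rather than assuming $\mathbb{P}$ is Gorenstein.
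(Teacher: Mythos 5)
The paper never proves this theorem: it is imported verbatim from \cite[Cor.6.6]{Bat22}, so there is no internal argument to compare against. Judged on its own, your proof is correct and self-contained, and it supplies exactly what the paper delegates to Batyrev. The three pillars all check out: $\mathbb{Q}$-Cartierness is immediate from $\mathbb{Q}$-factoriality of the simplicial fan $\Sigma$; your nefness argument is the standard toric criterion, since the piecewise linear function of $K_{\mathbb{P}}+Y$ takes the values $\ord_{\Delta}(\nu_i)+1=\ord_{F(\Delta)}(\nu_i)$ on the rays of $\Sigma$, is linear on each cone (as $\Sigma$ refines $\Sigma_{\tilde{\Delta}}$, which refines the generalized normal fan of each Minkowski summand of $\tilde{\Delta}=C(\Delta)+F(\Delta)$), and hence coincides with the concave function $\ord_{F(\Delta)}$; and the terminality argument via the empty-simplex criterion together with the interpolation $\ord_{F(\Delta)}(u)=\ord_{C(\Delta)}(u)+\sum_i\lambda_i\le\ord_{\Delta}(u)+1$, which forces $u\in S_F(\Delta)=\Sigma[1]$ and then $u=u_i$, is sound. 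Two small points deserve to be written out rather than asserted: first, the identity $\ord_{C(\Delta)}(u_i)=\ord_{\Delta}(u_i)$ for $u_i\in S_F(\Delta)$ (one direction from $\Delta\subseteq C(\Delta)$, the other because $\langle x,u_i\rangle\ge\ord_{\Delta}(u_i)$ is one of the defining inequalities of $C(\Delta)$); second, the final step $u=u_i$ needs that elements of $S_F(\Delta)$ are primitive, which follows since $u=k\nu_0$ with $k\ge 2$ and $\nu_0\in N$ primitive would give $\ord_{F(\Delta)}(u)-\ord_{\Delta}(u)\ge k>1$, contradicting membership in $S_F(\Delta)$. Finally, it is worth noting that the paper's own later machinery gives a one-line alternative for nefness, which is essentially Batyrev's route: by \eqref{pullback_formula_adjoint_div} one has $m(K_{\mathbb{P}}+Y)=(\theta\circ\pi)^*(mD_{F(\Delta)})$ with $D_{F(\Delta)}$ ample on $\mathbb{P}_{F(\Delta)}$, and a pullback of a nef divisor under a proper toric morphism is nef; your support-function argument buys independence from that pullback formula, at the cost of carrying the normal-fan refinement bookkeeping explicitly.
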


\begin{remark} \label{Proposition_normality} (\cite[Thm.7.5]{Bat22}) \\
	\normalfont
	Let $\Delta$ be an $n$-dimensional lattice polytope with $F(\Delta) \neq \emptyset$. Then $Y$ does not contain any $(n-2)$-dimensional torus orbit of $\mathbb{P}$ and by this is a normal variety. We may apply the adjunction formula
	\[ K_{Y} = (K_{\mathbb{P}} + Y)_{\vert{Y}}.  \]
\end{remark}

\begin{theorem} (\cite[Thm.8.2]{Bat22}) \\
	Let $\Delta$ be an $n$-dimensional lattice polytope with $F(\Delta) \neq \emptyset$. Then the closure $Y$ gets a minimal model of $Z_f$, i.e. $Y$ is a projective variety birational to $Z_f$, has at most terminal singularities and $K_{Y}$ is nef. 
\end{theorem}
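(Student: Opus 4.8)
The definition of a minimal model imposes three conditions on $Y$: it must be projective and birational to $Z_f$, it must have at most terminal singularities, and $K_Y$ must be nef. The plan is to dispatch the first and third conditions quickly using the results already collected above, and to concentrate the effort on terminality, which is the real content.

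First I would record that $Y$ is projective and birational to $Z_f$. By construction $Y$ is the closure of $Z_f$ in $\mathbb{P}$, and $\mathbb{P}$ may be taken to be a projective toric variety (a coherent simplicial subdivision with ray set $S_F(\Delta)$ refining $\Sigma_{\tilde{\Delta}}$ is projective), so its closed subvariety $Y$ is projective. Since $\pi$ and $\rho$ restrict to the identity on the torus $T$, we have $Z_f = Y \cap T$ as a dense open subset of $Y$, whence $Y$ is birational to $Z_f$. For nefness I would combine Theorem~\ref{theorem_min_models}, which gives that $K_{\mathbb{P}}+Y$ is $\mathbb{Q}$-Cartier and nef, with the adjunction formula $K_Y = (K_{\mathbb{P}}+Y)|_Y$ of Remark~\ref{Proposition_normality}: for every curve $C \subset Y$ one has $K_Y \cdot C = (K_{\mathbb{P}}+Y)\cdot C \geq 0$, so $K_Y$ is nef, and it is $\mathbb{Q}$-Cartier as the restriction of a $\mathbb{Q}$-Cartier divisor.

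The heart of the argument is terminality. Here I would pass to a smooth projective refinement $\Sigma'$ of $\Sigma$ with associated toric morphism $p \colon \mathbb{P}' \to \mathbb{P}$, and let $Y'$ be the closure of $Z_f$ in $\mathbb{P}'$, that is the strict transform of $Y$. Because $f$ is nondegenerate with respect to $\Delta$, the hypersurface $Y'$ meets every torus orbit of the smooth variety $\mathbb{P}'$ transversally and is therefore smooth, so $\sigma := p|_{Y'}\colon Y' \to Y$ is a resolution (it is proper and birational, as $p$ is). To compute discrepancies I would apply adjunction twice together with the linear equivalence~\eqref{formula_lin_equiv_class_hyp}. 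On $\mathbb{P}'$ one has $K_{\mathbb{P}'} = -\sum_{\nu} D_{\nu}$ and $Y' \sim_{lin} -\sum_{\nu}\ord_{\Delta}(\nu)\,D_{\nu}$, while $K_{\mathbb{P}}+Y$ is the nef divisor with polytope $F(\Delta)$, so $p^*(K_{\mathbb{P}}+Y)$ has coefficient $-\ord_{F(\Delta)}(\nu)$ along each $D_{\nu}$. Subtracting yields
\[
(K_{\mathbb{P}'}+Y') - p^*(K_{\mathbb{P}}+Y) = \sum_{\nu \in \Sigma'[1]} \bigl(\ord_{F(\Delta)}(\nu) - \ord_{\Delta}(\nu) - 1\bigr)\, D_{\nu},
\]
and restricting to $Y'$, using $K_{Y'} = (K_{\mathbb{P}'}+Y')|_{Y'}$ and $\sigma^*K_Y = p^*(K_{\mathbb{P}}+Y)|_{Y'}$, expresses $K_{Y'} - \sigma^* K_Y$ as a combination of the divisors $D_{\nu}\cap Y'$ with the same coefficients.

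It then remains to check positivity on the exceptional locus. Every exceptional ray $\nu \in \Sigma'[1]\setminus\Sigma[1]$ lies outside $\Sigma[1]=S_F(\Delta)$, and for such $\nu$ the defining inequality $\ord_{F(\Delta)}(\nu) \geq \ord_{\Delta}(\nu)+1$ of the Fine interior is \emph{strict}, precisely because equality characterises $S_F(\Delta)$; hence each coefficient $\ord_{F(\Delta)}(\nu)-\ord_{\Delta}(\nu)-1$ is strictly positive. Since $\mathrm{Exc}(\sigma) \subset Y' \cap \mathrm{Exc}(p) = \bigcup_{\nu}(D_{\nu}\cap Y')$, every $\sigma$-exceptional prime divisor occurs among the $D_{\nu}\cap Y'$ with strictly positive discrepancy, so all discrepancies of $Y$ are positive and $Y$ is terminal. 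I expect the main obstacle to be the two geometric inputs rather than the positivity bookkeeping: first, that nondegeneracy genuinely forces $Y'$ to be smooth so that $\sigma$ is an honest resolution; and second, that the divisorial identity on $\mathbb{P}'$ restricts cleanly to $Y'$ and accounts for \emph{all} exceptional divisors of $\sigma$, which itself relies on $Y$ avoiding the small torus orbits as recorded in Remark~\ref{Proposition_normality}.
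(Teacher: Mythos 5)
The paper itself does not prove this theorem; it is quoted directly from \cite[Thm.8.2]{Bat22}, so your argument can only be measured against that source, whose strategy — resolving by a toric modification and computing discrepancies through the quantities $\ord_{F(\Delta)}(\nu)-\ord_{\Delta}(\nu)-1$ — your proposal essentially reproduces. Your handling of projectivity, birationality and nefness is fine, and so is the positivity bookkeeping at the end: since $\Sigma$ is simplicial, every cone of $\Sigma'$ not in $\Sigma$ must contain a new ray, so $\mathrm{Exc}(p)\subset\bigcup_{\nu\text{ new}}D_\nu$; and Remark \ref{Proposition_normality} forces every component of $D_\nu\cap Y'$ for $\nu\notin\Sigma[1]$ to be $\sigma$-exceptional (its image lies in $Y\cap V(\sigma_\nu)$ with $\dim\sigma_\nu\geq 2$, which has dimension at most $n-3$), so your linear equivalence really does compute discrepancies, and they are strictly positive exactly because equality in $\ord_{F(\Delta)}(\nu)\geq\ord_{\Delta}(\nu)+1$ characterises $S_F(\Delta)$.

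The genuine gap is the first step of the terminality argument: the claim that, because $f$ is nondegenerate with respect to $\Delta$, the closure $Y'$ of $Z_f$ in an \emph{arbitrary} smooth projective refinement $\Sigma'$ of $\Sigma$ is smooth and transverse to all orbits. Nondegeneracy guarantees this only for fans refining the normal fan $\Sigma_\Delta$ (Khovanskii's theorem), and $\Sigma$ — hence a refinement of $\Sigma$ — need not refine $\Sigma_\Delta$: its rays are $S_F(\Delta)$, which can omit facet normals of $\Delta$ whenever $C(\Delta)\neq\Delta$. The principle you invoke is false in general: take $\Delta=\mathrm{conv}\{(0,0),(3,0),(0,1)\}$ and $f=1+x^3+y$, which is $\Delta$-regular; the closure of $Z_f$ in $\mathbb{P}^2$, a smooth toric variety whose fan does not refine $\Sigma_\Delta$ (the ray $(-1,-1)$ lies in the interior of the cone spanned by $(0,1)$ and $(-1,-3)$), is the curve $Z^3+X^3+YZ^2=0$, which is singular at $[0:1:0]$. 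The repair is cheap but must be said: choose $\Sigma'$ to be a smooth projective refinement of a common refinement of $\Sigma$ \emph{and} $\Sigma_\Delta$. Then $p\colon\mathbb{P}'\to\mathbb{P}$ still exists, $Y'$ is smooth by Khovanskii, the extra rays coming from $\Sigma_\Delta[1]\setminus S_F(\Delta)$ still carry strictly positive coefficients (they are not in $S_F(\Delta)$), and the rest of your argument goes through unchanged.
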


See (\cite[Def.2.34]{KM98}) for a definition of terminal singularities. For a toric variety there is a combinatorial criterion for singularities to be terminal (see \cite[(1.11,1.12)]{Rei83}).

\begin{remark} (\cite[Cor.6.5, Thm.8.1]{Bat22})  \\ \normalfont
	In this article we just work with the minimal model $Y$ but note the following: In the same situation as in Theorem \ref{theorem_min_models} $\mathbb{P}_{\tilde{\Delta}}$ has at most canonical singularities and the morphism $\pi: \mathbb{P} \rightarrow \mathbb{P}_{\tilde{\Delta}}$ is crepant, that is $\pi^*(K_{\mathbb{P}_{\tilde{\Delta}}}) = K_{\mathbb{P}}$. Likewise $Z_{\tilde{\Delta}}$ has at most canonical singularities and $\pi:Y \rightarrow Z_{\tilde{\Delta}}$ is crepant.
\end{remark}

\begin{remark} \label{remark_polytope_Fine_int_assoc_to_divisor}
\normalfont
Assuming $F(\Delta) \neq \emptyset$ we have by Construction \ref{construction_lin_equiv_class_hyp} 
\[ Y \sim_{lin} - \sum\limits_{\nu_i \in S_F(\Delta)} \ord_{\Delta}(\nu_i) D_i, \quad K_{\mathbb{P}} = - \sum\limits_{\nu_i \in S_F(\Delta)} D_i.  \]
Thus to the divisor $Y + K_{\mathbb{P}}$ is associated the polytope
\[ \{ x \in M_{\mathbb{R}} \mid \langle x, \nu_i \rangle \geq \ord_{\Delta}(\nu_i) + 1, \quad \nu_i \in S_F(\Delta)  \},  \]
which is exactly $F(\Delta)$. In particular the lattice points of $F(\Delta)$ count the global sections of $Y + K_{\mathbb{P}}$.
\end{remark}

\section{The plurigenera of minimal models} \label{section_plurigenera}

First the Kodaira dimension of $Y$, which by definition is the Kodaira dimension of a resolution of singularities of $Y$, has already been computed: For $\Delta$ an $n$-dimensional lattice polytope with $F(\Delta) \neq \emptyset$ and $k := \dim \, F(\Delta)$ if $k=n$ then $\kappa(Y) = n-1$, else $\kappa(Y) = k$ (\cite[Thm.9.2]{Bat22}). In the following Theorem we also compute the plurigenera of $Y$:

\begin{theorem} \label{theorem_plurigenera}
Let $\Delta$ be an $n$-dimensional lattice polytope, where $n \geq 2$, with $k:= \dim \, F(\Delta) \geq 0$. Then for $m \geq 1$ the plurigenera $P_m(Y):= h^{0}(Y, m K_Y)$ are given by
	\[
P_m(Y) = \left\{ \begin{array}{lll} l(m \cdot F(\Delta)) - l^*((m-1) \cdot F(\Delta)), & k = n \\
l(m \cdot F(\Delta)) + l^*((m-1) \cdot F(\Delta)), & k = n-1 \\
l(m \cdot F(\Delta)) & k < n-1,

\end{array}\right.
\]
\end{theorem}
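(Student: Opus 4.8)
The plan is to use the short exact sequence
\[
0 \rightarrow \mathcal{O}_{\mathbb{P}}((m-1)(K_{\mathbb{P}} + Y) + K_{\mathbb{P}}) \rightarrow \mathcal{O}_{\mathbb{P}}(m(K_{\mathbb{P}} + Y)) \rightarrow \mathcal{O}_Y(mK_Y) \rightarrow 0,
\]
which arises from the adjunction formula $K_Y = (K_{\mathbb{P}} + Y)_{\vert Y}$ available by Remark \ref{Proposition_normality}, together with the identification $m(K_{\mathbb{P}} + Y) - Y = (m-1)(K_{\mathbb{P}} + Y) + K_{\mathbb{P}}$. Taking the long exact sequence in cohomology, the middle term computes global sections of $m(K_{\mathbb{P}} + Y)$, and by Remark \ref{remark_polytope_Fine_int_assoc_to_divisor} the polytope associated to $K_{\mathbb{P}} + Y$ is $F(\Delta)$, so the polytope associated to $m(K_{\mathbb{P}} + Y)$ is $m \cdot F(\Delta)$. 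Hence $h^0(\mathbb{P}, \mathcal{O}_{\mathbb{P}}(m(K_{\mathbb{P}}+Y))) = l(m \cdot F(\Delta))$ for $m \geq 1$. The target term is precisely $P_m(Y)$. So the whole computation reduces to understanding the contribution of the first term and the connecting maps.

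First I would treat the first sheaf $\mathcal{O}_{\mathbb{P}}((m-1)(K_{\mathbb{P}}+Y)+K_{\mathbb{P}})$. The key move is Serre duality on $\mathbb{P}$: since $\mathbb{P}$ has at most terminal (in particular $\mathbb{Q}$-Gorenstein, Cohen--Macaulay) singularities by Theorem \ref{theorem_min_models}, we have $h^i(\mathbb{P}, \mathcal{O}_{\mathbb{P}}(D)) \cong h^{n-i}(\mathbb{P}, \mathcal{O}_{\mathbb{P}}(K_{\mathbb{P}} - D))^{\vee}$. Applied to $D = (m-1)(K_{\mathbb{P}}+Y)+K_{\mathbb{P}}$ one gets $K_{\mathbb{P}} - D = -(m-1)(K_{\mathbb{P}}+Y)$, whose associated object counts the \emph{interior} lattice points of $(m-1) \cdot F(\Delta)$; concretely I expect $h^n(\mathbb{P}, \mathcal{O}_{\mathbb{P}}(D)) = l^*((m-1) \cdot F(\Delta))$ by the standard toric identification of top cohomology of a nef divisor's negative with interior lattice points of the associated polytope. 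The strategy is then to show all intermediate cohomologies vanish, so that only $H^0$ and $H^n$ of the first term, and $H^0$ of the middle term, survive in the long exact sequence.

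The vanishing step is where I would invoke a toric vanishing theorem, in the spirit of Batyrev--Borisov / Mavlyutov vanishing for nef divisors on complete toric varieties: since $m(K_{\mathbb{P}}+Y)$ is $\mathbb{Q}$-Cartier and nef, its higher cohomology vanishes, giving $h^i(\mathbb{P}, \mathcal{O}_{\mathbb{P}}(m(K_{\mathbb{P}}+Y))) = 0$ for $i \geq 1$. One must be careful that the divisor is only $\mathbb{Q}$-Cartier, so I would use the appropriate form of the vanishing theorem valid for nef $\mathbb{Q}$-Cartier divisors with terminal (hence $\mathbb{Q}$-Gorenstein) ambient singularities. With the middle term having no higher cohomology, the long exact sequence collapses to
\[
0 \rightarrow H^0(\text{first}) \rightarrow H^0(\text{middle}) \rightarrow P_m(Y) \rightarrow H^1(\text{first}) \rightarrow 0,
\]
and the higher connecting maps identify $H^1(\text{first})$ with $H^n(\text{first})^{\vee}$-type contributions only in the relevant degrees. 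The dimension bookkeeping then yields $P_m(Y) = l(m \cdot F(\Delta)) - h^0(\text{first}) + h^1(\text{first})$.

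The main obstacle, and the place where the three cases of the theorem must diverge, is computing $h^0$ and $h^1$ of the first term $\mathcal{O}_{\mathbb{P}}((m-1)(K_{\mathbb{P}}+Y)+K_{\mathbb{P}})$ and interpreting the sign. When $k = \dim F(\Delta) = n$, the dimension count of the duality gives a genuine nonzero top cohomology $l^*((m-1)\cdot F(\Delta))$ entering with a minus sign, matching the first case. When $k < n$, the Fine interior is lower-dimensional so $(m-1)\cdot F(\Delta)$ has no interior lattice points in the ambient sense and the correction vanishes, giving the third case. The borderline case $k = n-1$ is the subtle one: here $\theta$ in diagram \eqref{diagram_toric_morphisms} need no longer be birational, and I expect the interior lattice point term to reappear but with the opposite sign, arising from a parity or dimension shift in which cohomological degree carries the nonzero contribution. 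Pinning down precisely why the sign flips between $k = n$ and $k = n-1$ — presumably by carefully tracking the position of the nonvanishing cohomology group of the first term relative to $H^0$ and $H^1$, i.e. whether the surviving interior-point contribution lands in $H^1$ (adding) rather than being subtracted off against $H^0$ — is the delicate heart of the argument and is where I would spend most of the effort.
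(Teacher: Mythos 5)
Your proposal follows the same route as the paper's proof: the ideal-sheaf/adjunction sequence, the identification $h^0(\mathbb{P}, m(K_{\mathbb{P}}+Y)) = l(m\cdot F(\Delta))$ from Remark \ref{remark_polytope_Fine_int_assoc_to_divisor}, Demazure vanishing (\cite[Thm.9.2.3]{CLS11}) for the nef $\mathbb{Q}$-Cartier divisor $K_{\mathbb{P}}+Y$ to truncate the long exact sequence to
\[
0 \to H^0(\mathbb{P}, D_{m-1}) \to H^0(\mathbb{P}, m(K_{\mathbb{P}}+Y)) \to H^0(Y, mK_Y) \to H^1(\mathbb{P}, D_{m-1}) \to 0,
\]
where $D_{m-1} := (m-1)(K_{\mathbb{P}}+Y)+K_{\mathbb{P}}$, and then toric Serre duality to convert $H^i(\mathbb{P}, D_{m-1})$ into $H^{n-i}(\mathbb{P}, -(m-1)(K_{\mathbb{P}}+Y))^{\vee}$. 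Up to this point your sketch is exactly the paper's argument.

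However, the step you defer as the ``delicate heart'' --- why the $l^*$-term is subtracted when $k=n$, added when $k=n-1$, and absent when $k<n-1$ --- is not settled in the paper by any parity argument or tracking of connecting maps; it is the literal content of the Batyrev--Borisov vanishing theorem (\cite[Thm.9.2.7]{CLS11}) in its general form, which your sketch quotes only in the full-dimensional case. That theorem states: for a nef $\mathbb{Q}$-Cartier divisor $D$ on a complete toric variety, $H^p(\mathbb{P}, \mathcal{O}_{\mathbb{P}}(-D))$ vanishes for every $p \neq \dim P_D$ and has dimension $l^*(P_D)$ in the single degree $p = \dim P_D$, the interior being taken relative to the affine hull of $P_D$. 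Applying this to $D = (m-1)(K_{\mathbb{P}}+Y)$, whose polytope $(m-1)F(\Delta)$ has dimension $k$ for $m \geq 2$, and dualizing degree by degree via \cite[(9.2.9)]{CLS11}, the unique nonvanishing group lands in $h^{n-k}(\mathbb{P}, D_{m-1})$: it is $h^0$ (subtracting) exactly when $k=n$, it is $h^1$ (adding) exactly when $k=n-1$, and for $k<n-1$ it sits in degree $\geq 2$ and never enters the four-term sequence. All three cases thus follow from this one citation; without the concentration statement your plan has no mechanism to distinguish $k=n-1$ from $k<n-1$, so this is the one genuine missing ingredient. Two smaller repairs: your duality claim should read $h^0(\mathbb{P}, \mathcal{O}_{\mathbb{P}}(D_{m-1})) = h^n(\mathbb{P}, \mathcal{O}_{\mathbb{P}}(-(m-1)(K_{\mathbb{P}}+Y)))^{\vee}$, not a statement about $h^n(\mathbb{P}, \mathcal{O}_{\mathbb{P}}(D_{m-1}))$; and $m=1$ requires the separate (easy) computation $h^0(\mathbb{P},K_{\mathbb{P}}) = 0$ and $h^1(\mathbb{P},K_{\mathbb{P}}) = h^{n-1}(\mathbb{P},\mathcal{O}_{\mathbb{P}})^{\vee} = 0$, as in the paper --- this is where $n \geq 2$ enters, and it is not subsumed by the $m \geq 2$ bookkeeping since $0\cdot F(\Delta)$ is a point regardless of $k$.
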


\begin{proof}
If we take cohomology groups of a divisor $D$ we always mean the cohomology groups of the sheaf $\mathcal{O}(D)$ associated to $D$. Since $\mathbb{P}$ is $\mathbb{Q}$-factorial every Weil divisor on $\mathbb{P}$ is $\mathbb{Q}$-Cartier. By Remark \ref{remark_polytope_Fine_int_assoc_to_divisor} we have 
\[ H^{0}(\mathbb{P}, m (K_{\mathbb{P}}+ Y)) \cong L(m \cdot F(\Delta)) \quad m \in \mathbb{N}_{>0}. \]
We use an ideal sheaf sequence for $Y \subset \mathbb{P}$ and apply the adjunction formula
\begin{align*}
& 0 \rightarrow H^{0}(\mathbb{P}, (m-1)(K_{\mathbb{P}}+Y) + K_{\mathbb{P}}) \rightarrow H^{0}(\mathbb{P}, m (K_{\mathbb{P}}+ Y)) \\
& \rightarrow  H^{0}(Y, mK_Y) \rightarrow H^{1}(\mathbb{P}, (m-1)(K_{\mathbb{P}}+Y) + K_{\mathbb{P}})	\rightarrow 0
\end{align*}
where
\[ H^1(\mathbb{P}, m(K_{\mathbb{P}} + Y)) = 0 \]
by Demazure's vanishing (\cite[Thm.9.2.3]{CLS11}) for the divisor $K_{\mathbb{P}} + Y$, which is nef by Theorem \ref{theorem_min_models}. By (\cite[Thm.9.2.10a)]{CLS11}) we  may apply Serre-duality to the divisor $K_{\mathbb{P}}$. Thus for $m=1$ we have $h^0(\mathbb{P}, K_{\mathbb{P}}) = 0$ and 
\[ h^1(\mathbb{P}, K_{\mathbb{P}}) = h^{n-1}(\mathbb{P}, \mathcal{O}_{\mathbb{P}}) = 0  \]
by Demazure's vanishing Theorem again. Thus $P_1(Y)$ is given by
\[ P_1(Y) = l(F(\Delta)) = l^*(\Delta). \]
For $m \geq 2$ we apply Serre duality to the divisor  $(m-1)(K_{\mathbb{P}} + Y)$: We have  splittings
\begin{align*}
	& H^{n-p}(\mathbb{P}, \mathcal{O}_{\mathbb{P}}(K_{\mathbb{P}} + Y)) \cong \bigoplus\limits_{m \in M} H^{n-p}(\mathbb{P}, \mathcal{O}_{\mathbb{P}}(K_{\mathbb{P}} + Y))_m \cdot \chi^m   \\
	& H^{p}(\mathbb{P}, \mathcal{O}_{\mathbb{P}}(-Y)) \cong \bigoplus\limits_{m \in M} H^{p}(\mathbb{P}, \mathcal{O}_{\mathbb{P}}(-Y))_m \cdot \chi^m 
\end{align*}

and by (\cite[Ex.9.12 formula (9.2.9)]{CLS11}) Serre duality restricts to a duality
\[ \big( H^{n-p}(\mathbb{P}, \mathcal{O}_{\mathbb{P}}(K_{\mathbb{P}} + Y))_m \big)^* \cong H^{p}(\mathbb{P}, \mathcal{O}_{\mathbb{P}}(-Y))_{-m}.  \]
Thus for $D:= K_{\mathbb{P}} + Y$
\begin{equation*}
	H^{n-p}(\mathbb{P}, \mathcal{O}_{\mathbb{P}}(D)) \cong \left\{ \begin{array}{ll} 0 & p \neq \dim \, P_D \\
		\bigoplus\limits_{m \in L^*(P_D)} \mathbb{C} \cdot \chi^m & p = \dim \, P_D
	\end{array}\right. , 	
\end{equation*}
by the Batyrev-Borisov vanishing Theorem (\cite[Thm.9.2.7]{CLS11}), where $P_D$ denotes the polytope associated to $D$. We get
\begin{align*}
H^{0}(\mathbb{P}, (m-1)(K_{\mathbb{P}}+Y) + & K_{\mathbb{P}})  \cong H^n(\mathbb{P}, -(m-1)(K_{\mathbb{P}} + Y))^* \\
& \cong \left \{\begin{array}{ll} 0, & \dim \, F(\Delta) \leq n-1 \\
L^{*}((m-1)F(\Delta)), & \dim \, F(\Delta) = n  \end{array}\right.
\end{align*}
and
\begin{align*}
H^{1}(\mathbb{P}, (m-1)(K_{\mathbb{P}}+Y) + & K_{\mathbb{P}}) \cong H^{n-1}(\mathbb{P}, -(m-1)(K_{\mathbb{P}} + Y))^* \\
& \cong \left \{\begin{array}{lll} L^{*}((m-1)F(\Delta)), & \dim \, F(\Delta) =n-1 \\
0, & \dim \, F(\Delta) \neq n-1  \end{array}\right.
\end{align*}
The result follows by adding the dimensions in the exact sequence.

\end{proof}

\begin{example} \normalfont
	If $\dim \, \Delta=2$ with $F(\Delta) \neq \emptyset$ then $Y:= Z_{\Delta}$ is a smooth curve. As noted in Remark \ref{Remark_Jon_Fine_2_dim_Fine_interior} $F(\Delta)$ equals the convex span of the interior lattice points of $\Delta$. There is the following distinction
	\[  
	Y = \left\{ \begin{array}{lll} \textrm{ elliptic curve} & \dim \, F(\Delta) = 0 \\
	\textrm{ hyperelliptic curve}, \, g(Y) \geq 2 & \dim \, F(\Delta) = 1 \\
	\textrm{ non-hyperelliptic curve}, \, g(Y) \geq 3 & \dim \, F(\Delta) = 2 
	\end{array}
	\right.
	\]
	
	If $\dim \, F(\Delta) = 1$ then then the morphism 
	\[ \theta \circ \pi: \mathbb{P} \rightarrow \mathbb{P}_{F(\Delta)} \cong \mathbb{P}^1 \]
	induces the double covering $Y \rightarrow \mathbb{P}^1$. Conversely if $Y:= Z_{\Delta}$ is hyperelliptic of genus $\geq 2$ then $K_Y$ defines the double covering and with (\ref{pullback_formula_adjoint_div}) it follows easily that $\dim \, F(\Delta) = 1$. The formula in Theorem \ref{theorem_plurigenera} gets either trivial or reduces to Pick's formula.
\end{example}

	\begin{remark}
		\normalfont
		For $Y$ a minimal surface of general type the plurigenera of $Y$ are given by
		\[ P_{m}(Y) = \chi(Y, \mathcal{O}_Y) + \frac{m(m-1)}{2} K_{Y}^2.  \]
		Similarly the plurigenera of surfaces $Y$ with $\kappa(Y) = 1$, that is properly elliptic surfaces, could be deduced from the canonical bundle formula (see \cite[Ch.1, Prop.3.22]{FrMo94}). But in general it is difficult to compute the plurigenera already of a $3$-fold, say with at most canonical singularities (compare (\cite[Cor.(10.3)]{Rei87})).
	\end{remark}
		
	\begin{remark}
	\normalfont
		We have to be careful with any conclusions from the formula in Theorem \ref{theorem_plurigenera}: If $p_g(Y) = 0$ it might well be the case that the plurigenera are not monotone increasing in $m$, since $F(\Delta)$ is just a rational polytope and in the formula we just count the lattice points (see \cite[Example 3.6]{Bat22} for an example).
					
	\end{remark}

\section{The volume of $K_Y$}

The lattice normalized volume $\Vol_{\mathbb{Z}}(F)$ of a rational polytope $F$ may be defined by
\begin{align} \label{formula_lattice_vol_asymptotic}
	\Vol_{\mathbb{Z}}(F) = \lim\limits_{m \rightarrow \infty} \frac{l(m \cdot F) \cdot (\dim \, F)!}{m^{\dim \, F}}.
\end{align}
This is a standard fact (\cite[Lemma 3.19]{BR09}) and normalized means that the standard $n$-simplex $\Delta_n$ has $\Vol_{\mathbb{Z}}(\Delta_n) = 1$. We refer to (\cite{BR09}) for details. The following formula follows easily from the formula for the plurigenera. It was first and independently found by V. Batyrev in (\cite[Thm.9.4]{Bat22}).

\begin{corollar} \label{corollary_volume_of_can_divisor}
	Let $\Delta$ be an $n$-dimensional lattice polytope with $F(\Delta) \neq \emptyset$. Then
	\[
	K_Y^{n-1} = \left\{ \begin{array}{lll} 
		\Vol_{\mathbb{Z}}(F(\Delta)) + \sum\limits_{Q \textrm{ facet of }F(\Delta)} \Vol_{\mathbb{Z}}(Q), & \dim \, F(\Delta) = n \\
		2 \cdot \Vol_{\mathbb{Z}}(F(\Delta)) & \dim \, F(\Delta) = n-1 \\
		0 & \dim \, F(\Delta) < n-1
		
	\end{array}\right .
	\]
\end{corollar}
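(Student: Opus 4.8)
The plan is to read off $K_Y^{n-1}$ as the leading coefficient of the plurigenera. Since $Y$ is a minimal model, $K_Y$ is nef (Theorem \ref{theorem_min_models} together with the adjunction formula of Remark \ref{Proposition_normality}), and $\dim Y = n-1$. Hence the volume of $K_Y$ equals its top self-intersection $K_Y^{n-1}$, and for a nef divisor this volume is a \emph{genuine} limit
\[ K_Y^{n-1} = \lim_{m \to \infty} \frac{(n-1)!}{m^{n-1}}\, P_m(Y). \]
So I would substitute the three formulas of Theorem \ref{theorem_plurigenera} and extract the coefficient of $m^{n-1}$ in each regime.

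First I would record three asymptotic facts from Ehrhart theory, all of which involve only leading (hence period-one, well-defined) coefficients: by (\ref{formula_lattice_vol_asymptotic}) one has $l(mF)\sim \Vol_{\mathbb{Z}}(F)\, m^{\dim F}/(\dim F)!$; the interior counting function $l^*(mF)$ has the same leading term, since the boundary is lower-dimensional; and for a full-dimensional rational polytope $F\subset M_{\mathbb{R}}$ the difference $l(mF)-l^*(mF)$ is exactly the number of boundary lattice points of $mF$, whose leading term is $\big(\sum_{Q}\Vol_{\mathbb{Z}}(Q)\big)m^{n-1}/(n-1)!$, the sum running over the facets $Q$ of $F$ (lower-dimensional faces contribute only $O(m^{n-2})$). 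With these the two easy cases are immediate. For $k=n-1$ both $l(mF(\Delta))$ and $l^*((m-1)F(\Delta))$ contribute $\Vol_{\mathbb{Z}}(F(\Delta))\,m^{n-1}/(n-1)!$, giving $K_Y^{n-1}=2\Vol_{\mathbb{Z}}(F(\Delta))$; for $k<n-1$ the single term $l(mF(\Delta))$ grows like $m^{k}$ with $k<n-1$, so the rescaled limit is $0$.

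The case $k=n$ is the crux, since there the two $m^{n}$-terms in $P_m(Y)=l(mF(\Delta))-l^*((m-1)F(\Delta))$ cancel and one must control the next order. Writing $F=F(\Delta)$ I would split
\[ P_m(Y)=\big(l(mF)-l((m-1)F)\big)+\big(l((m-1)F)-l^*((m-1)F)\big). \]
The second bracket counts the boundary points of $(m-1)F$, so by the above its rescaled limit is $\sum_{Q}\Vol_{\mathbb{Z}}(Q)$. For the first bracket $A_m:=l(mF)-l((m-1)F)$ I would use the telescoping identity $\sum_{j=1}^{m}A_j=l(mF)-1\sim \Vol_{\mathbb{Z}}(F)\,m^{n}/n!$: since the total limit and the boundary limit both exist, $\lim_m (n-1)!\,A_m/m^{n-1}$ exists, and a Stolz--Cesàro / Abel-summation argument then forces this value to equal $\Vol_{\mathbb{Z}}(F)$. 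Adding the two contributions yields $K_Y^{n-1}=\Vol_{\mathbb{Z}}(F(\Delta))+\sum_{Q}\Vol_{\mathbb{Z}}(Q)$.

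The one genuinely delicate point, which I expect to be the main obstacle, is that $F(\Delta)$ is in general only a rational polytope, so its Ehrhart function is a quasi-polynomial whose sub-leading coefficients may be periodic in $m$; a naive term-by-term extraction of the $m^{n-1}$-coefficient in the case $k=n$, where the $m^n$-terms cancel, would therefore be ill-defined. The whole purpose of routing the computation through the volume is to sidestep this: because $K_Y$ is nef the rescaled plurigenera converge, so every quantity I extract is a bona fide limit determined only by the period-one leading coefficients, namely the normalized volumes of $F(\Delta)$ and of its facets, and the potentially periodic corrections never enter.
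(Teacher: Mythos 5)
Your proposal is correct and follows the same overall route as the paper: both start from nefness of $K_Y$ and the identity $K_Y^{n-1} = \lim_{m\to\infty}(n-1)!\,P_m(Y)/m^{n-1}$ (the paper cites \cite{Laz00} for this), dispose of the cases $\dim F(\Delta)=n-1$ and $\dim F(\Delta)<n-1$ by leading-term Ehrhart asymptotics, and in the case $\dim F(\Delta)=n$ use exactly the same splitting $P_m(Y)=\bigl(l(mF)-l((m-1)F)\bigr)+\bigl(l((m-1)F)-l^*((m-1)F)\bigr)$, with the second bracket handled by the boundary-volume statement (\cite[Thm.5.6]{BR09}). The one place you genuinely diverge is the first bracket: the paper simply asserts that its rescaled limit equals $\Vol_{\mathbb{Z}}(F(\Delta))$ ``by formula (\ref{formula_lattice_vol_asymptotic})'', which is not literally a consequence of that formula alone --- for a rational polytope the $m^{n-1}$-coefficient of the Ehrhart quasi-polynomial may be periodic, in which case a difference such as $l(mF)-l((m-1)F)$ need not converge after rescaling. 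Your Stolz--Ces\`aro argument closes exactly this point: existence of the limit is first deduced from the existence of the other two limits (nefness of $K_Y$ plus the boundary term), and telescoping then pins its value to $\Vol_{\mathbb{Z}}(F(\Delta))$. So your writeup is, if anything, more careful than the paper's at this step; the trade-off is that your evaluation of the first bracket is indirect, as it requires the a priori existence of $K_Y^{n-1}$ as a genuine limit, whereas the paper treats the three limits as independent combinatorial facts and only then assembles them.
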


\begin{proof}
	By (\cite[Remark after Def. 2.2.31]{Laz00}) we have
	\[ K_Y^{n-1} = \lim\limits_{m \rightarrow \infty} \frac{(n-1)! \cdot h^0(Y, mK_Y)}{m^{n-1}}.  \]
	\leavevmode
	\newline \newline
	It follows that $K_Y^{n-1} = 0$ if $\dim \, F(\Delta) < n-1$. If $\dim \, F(\Delta) = n-1$ then by (\cite[Thm.4.1]{BR09}) we have
	\[  \lim\limits_{m \rightarrow \infty} \frac{(n-1)! \cdot l^*((m-1)F(\Delta)}{m^{n-1}} =  \lim\limits_{m \rightarrow \infty} \frac{(n-1)! \cdot l(m \cdot F(\Delta))}{m^{n-1}} = \Vol_{\mathbb{Z}}(F(\Delta)) \]
	and the result follows. 
	\newline \newline
	If $\dim \, F(\Delta) = n$ then write
	\[ P_m(Y) = l(m F(\Delta)) - l((m-1)F(\Delta)) + l((m-1)F(\Delta)) - l^*((m-1) F(\Delta)).  \]
	By formula \ref{formula_lattice_vol_asymptotic} we have
	\[  \lim\limits_{m \rightarrow \infty} \frac{l(m F(\Delta)) - l((m-1)F(\Delta))}{m^{n-1}/(n-1)!} = \Vol_{\mathbb{Z}}(F(\Delta)).  \]
	Finally by (\cite[Thm.5.6]{BR09}) we have
	\[ \lim\limits_{m \rightarrow \infty} \frac{l((m-1) F(\Delta)) - l^*((m-1)F(\Delta))}{m^{n-1}/(n-1)!} = \sum\limits_{Q \textrm{ facet of }F(\Delta)} \Vol_{\mathbb{Z}}(Q).  \]
\end{proof}

\section{The pluricanonical mappings}

By Remark \ref{remark_polytope_Fine_int_assoc_to_divisor} for
\begin{align} \label{m_index_Fine_interior}
m := \min \{ n \in \mathbb{N}_{\geq 1} \mid n \cdot F(\Delta) \textrm{ is a lattice polytope} \},
\end{align} 
the divisor $m(K_{\mathbb{P}} + Y)$ is Cartier and nef and thus basepointfree. Thus so is the restriction $mK_Y$. Let $D_{F(\Delta)}$ denote the ample $\mathbb{Q}$-Cartier divisor on $\mathbb{P}_{F(\Delta)}$ associated to $F(\Delta)$. By (\cite[Thm.6.2.8]{CLS11}) and Remark \ref{remark_polytope_Fine_int_assoc_to_divisor} we have an equality
\begin{align} \label{pullback_formula_adjoint_div}
m(K_{\mathbb{P}} + Y) = (\theta \circ \pi)^*(m D_{F(\Delta)}). 
\end{align} 
Thus $m(K_{\mathbb{P}} + Y)$ induces the morphism $\theta \circ \pi: \mathbb{P} \rightarrow \mathbb{P}_{F(\Delta)}$. 

\begin{corollar} \label{theorem_can_model_surf_gen_type}
	Let $\Delta$ be an $n$-dimensional lattice polytope such that \\
	$\dim \, F(\Delta) = n$. Then $Z_{F(\Delta)}$ gets a canonical model of $Y$.
\end{corollar}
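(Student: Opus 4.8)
The plan is to exhibit $Z_{F(\Delta)}$, together with the morphism $\phi := \theta \circ \pi \colon Y \to Z_{F(\Delta)}$ extracted from the diagram (\ref{diagram_toric_morphisms}), as the canonical model of $Y$. Concretely I would verify the defining properties: $\phi$ is a birational morphism onto a normal projective variety with at most canonical singularities and \emph{ample} canonical divisor, and $K_Y = \phi^* K_{Z_{F(\Delta)}}$; by the intrinsic description $Y_{\mathrm{can}} = \mathrm{Proj}\bigoplus_{m \geq 0} H^0(Y, mK_Y)$ this identifies $Z_{F(\Delta)}$ with the canonical model. Note first that since $\dim F(\Delta) = \dim \Delta = n$, the map $\theta$, and hence $\phi = \theta \circ \pi$, is birational by the remark following (\ref{diagram_toric_morphisms}).

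First I would compute $K_{Z_{F(\Delta)}}$. The facet normals of $F(\Delta)$ lie in $S_F(\Delta)$ directly from Definition \ref{definition_Fine_interior} (a facet of $F(\Delta)$ is supported on $\langle x, \nu \rangle = \ord_\Delta(\nu) + 1$, so $\ord_{F(\Delta)}(\nu) = \ord_\Delta(\nu) + 1$), whence $\Sigma_{F(\Delta)}[1] \subset S_F(\Delta)$. Running the computation of Remark \ref{remark_polytope_Fine_int_assoc_to_divisor} on $\mathbb{P}_{F(\Delta)}$ in place of $\mathbb{P}$ then shows that $K_{\mathbb{P}_{F(\Delta)}} + Z_{F(\Delta)}$ is the divisor associated to the inequalities $\langle x, \nu_i \rangle \geq \ord_\Delta(\nu_i) + 1$ for $\nu_i \in \Sigma_{F(\Delta)}[1]$, i.e.\ exactly the full-dimensional polytope $F(\Delta)$; this is the ample $\mathbb{Q}$-Cartier divisor $D_{F(\Delta)}$. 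Granting normality of $Z_{F(\Delta)}$ so that adjunction applies (as in Remark \ref{Proposition_normality}), this yields $K_{Z_{F(\Delta)}} = D_{F(\Delta)}\vert_{Z_{F(\Delta)}}$, which is ample as the restriction of an ample divisor to a closed subvariety.

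Next I would establish crepancy. Restricting the pullback formula (\ref{pullback_formula_adjoint_div}) to $Y$ and using $K_Y = (K_{\mathbb{P}} + Y)\vert_Y$ from Remark \ref{Proposition_normality} gives $mK_Y = \phi^*\bigl(m D_{F(\Delta)}\vert_{Z_{F(\Delta)}}\bigr) = \phi^*(mK_{Z_{F(\Delta)}})$, hence $K_Y = \phi^* K_{Z_{F(\Delta)}}$ as $\mathbb{Q}$-divisors. Canonical singularities of $Z_{F(\Delta)}$ then follow by a discrepancy comparison: for a resolution $g \colon W \to Y$ the composite $\phi \circ g \colon W \to Z_{F(\Delta)}$ is a resolution, and from $g^* K_Y = (\phi \circ g)^* K_{Z_{F(\Delta)}}$ the two relative canonical divisors coincide, so the discrepancies of $Z_{F(\Delta)}$ match those of $Y$ over the same valuations and are $\geq 0$ because $Y$ has terminal, hence canonical, singularities by Theorem \ref{theorem_min_models}. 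Combining the steps, $\phi$ is a birational morphism to a normal projective variety with canonical singularities and ample $K_{Z_{F(\Delta)}}$ satisfying $K_Y = \phi^* K_{Z_{F(\Delta)}}$, and $\phi_*\mathcal{O}_Y = \mathcal{O}_{Z_{F(\Delta)}}$ identifies the canonical rings, so $Z_{F(\Delta)}$ is the canonical model.

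The main obstacle I expect is the normality of $Z_{F(\Delta)}$: unlike $Y$, the hypersurface $Z_{F(\Delta)}$ sits inside the possibly badly singular toric variety $\mathbb{P}_{F(\Delta)}$, and the clean ``avoids the $(n-2)$-dimensional orbits'' argument of Remark \ref{Proposition_normality} was carried out for $\mathbb{P}$, not for $\mathbb{P}_{F(\Delta)}$. I would either transport normality along $\theta$, using that $Z_{\tilde{\Delta}}$ is normal with canonical singularities and that $\theta$ has connected fibres so that $\theta_* \mathcal{O}_{Z_{\tilde{\Delta}}} = \mathcal{O}_{Z_{F(\Delta)}}$ via Stein factorization, or invoke directly that a nondegenerate toric hypersurface with defining polytope $F(\Delta)$ is normal. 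The secondary technical point is to justify that adjunction and the $\mathbb{Q}$-Cartier property of $K_{Z_{F(\Delta)}}$ hold on $Z_{F(\Delta)}$; the same normality input, together with the $\mathbb{Q}$-Cartier ampleness of $D_{F(\Delta)}$ on $\mathbb{P}_{F(\Delta)}$ and the fact that $\mathbb{Q}$-Cartier divisors restrict to $\mathbb{Q}$-Cartier divisors, should suffice.
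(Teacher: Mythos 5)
Your route is genuinely different from the paper's in its endgame, and most of it is sound: you verify the definition of a canonical model directly (ample canonical divisor, canonical singularities, crepant pullback), whereas the paper, after establishing birationality and normality, only checks that $\theta\circ\pi$ restricted to $Y$ contracts exactly the curves $C$ with $K_Y\cdot C=0$ --- which follows from restricting (\ref{pullback_formula_adjoint_div}) to $Y$ and the ampleness of $D_{F(\Delta)}$ --- and then cites \cite[Thm.3-3-6]{Mat02}. That characterization spares the paper your entire second step: it never computes $K_{Z_{F(\Delta)}}$ and never needs adjunction inside the singular ambient variety $\mathbb{P}_{F(\Delta)}$, only the adjunction on $\mathbb{P}$ already recorded in Remark \ref{Proposition_normality}. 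Your crepancy step, the discrepancy comparison giving canonical singularities of $Z_{F(\Delta)}$, and the inclusion $\Sigma_{F(\Delta)}[1]\subset S_F(\Delta)$ are all correct.

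The genuine gap is exactly the one you flagged, normality of $Z_{F(\Delta)}$, and neither of your proposed fixes works. The Stein factorization argument is circular: for a proper birational morphism with normal source such as $\theta$, the source factors through the normalization of the target, so $\theta_*\mathcal{O}_{Z_{\tilde{\Delta}}}$ is the integral closure of $\mathcal{O}_{Z_{F(\Delta)}}$ in its function field; hence $\theta_*\mathcal{O}_{Z_{\tilde{\Delta}}}=\mathcal{O}_{Z_{F(\Delta)}}$ is \emph{equivalent} to the normality you are trying to prove, and connected fibres alone never yield it (the normalization of a cuspidal curve is a proper bijection with connected fibres onto a non-normal target). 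The second fix begs the question: $f$ is nondegenerate with respect to $\Delta$, i.e.\ transversality is relative to the strata of $\mathbb{P}_{\Delta}$; since $\Sigma_{F(\Delta)}$ need not refine $\Sigma_{\Delta}$, this gives no a priori control over the closure of $Z_f$ in $\mathbb{P}_{F(\Delta)}$, which could in principle contain torus orbits. The paper's actual argument is an orbit-lifting one: by \cite[Thm.7.5]{Bat22}, if $Z_{F(\Delta)}$ were not normal it would contain an $(n-2)$-dimensional torus orbit of $\mathbb{P}_{F(\Delta)}$; taking preimages under the toric morphism $\theta\circ\pi$, whose restriction $Y\rightarrow Z_{F(\Delta)}$ is proper and surjective, $Y$ would then contain an $(n-2)$-dimensional torus orbit of $\mathbb{P}$, contradicting Remark \ref{Proposition_normality}. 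Note that this same orbit statement is also what you would need to legitimize adjunction for $Z_{F(\Delta)}\subset\mathbb{P}_{F(\Delta)}$; alternatively, once normality is known you can bypass that adjunction entirely by pushing forward along the birational morphism: $K_{Z_{F(\Delta)}}=(\theta\circ\pi)_*K_Y=(\theta\circ\pi)_*(\theta\circ\pi)^*\bigl(D_{F(\Delta)}\vert_{Z_{F(\Delta)}}\bigr)=D_{F(\Delta)}\vert_{Z_{F(\Delta)}}$.
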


\begin{proof}
	The morphism $\theta \circ \pi: \mathbb{P} \rightarrow \mathbb{P}_{F(\Delta)}$ is birational and equal to the identity on the torus. Thus also $(\theta \circ \pi)_{\vert{Y}}$ defines a birational morphism $Y \rightarrow Z_{F(\Delta)}$. $Z_{F(\Delta)}$ is normal since else it would contain an $(n-2)$-dimensional torus orbit of $\mathbb{P}_{F(\Delta)}$. But then also $Y$ would contain an $(n-2)$-dimensional torus orbit of $\mathbb{P}$ contradiciting Remark \ref{Proposition_normality}. By the adjunction formula we get for any curve $C \subset Y$
	\[ (\theta \circ \pi)(C) = pt \Leftrightarrow K_Y.C = 0.  \]
	Thus $Z_{F(\Delta)}$ equals the canonical model of $Y$ (\cite[Thm.3-3-6]{Mat02}).
\end{proof}

At least if $C(\Delta) = \Delta$ there is a useful criterion for $k \cdot K_Y$ to be basepointfree for a given $k \geq 1$:

\begin{proposition} \label{propositon_bpf_plurican_maps}
	Let $\Delta$ be a $n$-dimensional lattice polytope with $F(\Delta) \neq \emptyset$ and $C(\Delta) = \Delta$. Assume that $k \geq 1$ and every edge of $k \cdot F(\Delta)$ contains a lattice point. Then $k \cdot K_Y$ is basepointfree.
\end{proposition}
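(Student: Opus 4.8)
The plan is to exploit the adjunction identity $kK_Y = k(K_{\mathbb{P}}+Y)\vert_Y$ and to produce the needed nonvanishing sections of $\mathcal{O}_Y(kK_Y)$ by restricting monomial sections of $k(K_{\mathbb{P}}+Y)$ on $\mathbb{P}$. By Remark \ref{remark_polytope_Fine_int_assoc_to_divisor} we have $H^0(\mathbb{P},k(K_{\mathbb{P}}+Y))\cong L(k\cdot F(\Delta))$, with basis the characters $\chi^m$ for $m\in k\cdot F(\Delta)\cap M$. Since $Y$ meets only orbits on which $K_{\mathbb{P}}+Y$ is Cartier (it avoids the $(n-2)$-dimensional orbits by Remark \ref{Proposition_normality}), we have $\mathcal{O}_{\mathbb{P}}(k(K_{\mathbb{P}}+Y))\vert_Y\cong\mathcal{O}_Y(kK_Y)$, so it suffices to show that the restrictions of these $\chi^m$ have no common zero on $Y$; equivalently, that the linear system $|L(k\cdot F(\Delta))|$ is basepoint free along $Y$.

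First I would set up the local nonvanishing criterion on the torus orbits. For a cone $\sigma\in\Sigma$ write $F_\sigma:=(k\cdot F(\Delta))_\sigma$ for the face of $k\cdot F(\Delta)$ selected by $\sigma$, i.e.\ where $\langle\,\cdot\,,\nu\rangle$ is minimized for all $\nu\in\sigma$. The standard toric computation of vanishing orders along the divisors $D_i$ shows that $\chi^m$ is nonvanishing at a point of the orbit $O(\sigma)$ exactly when $m\in F_\sigma$; hence $|L(k\cdot F(\Delta))|$ is basepoint free at the points of $O(\sigma)$ if and only if $F_\sigma\cap M\neq\emptyset$. Next I would cut down the set of relevant cones using nondegeneracy: by $\Delta$-regularity, $Y\cap O(\sigma)$ is the hypersurface in the torus $O(\sigma)$ defined by the $\sigma$-leading form of $f$, whose support is the face $\Delta_\sigma$ of $\Delta$, so $Y\cap O(\sigma)\neq\emptyset$ if and only if $\dim\Delta_\sigma\geq 1$. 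Since $\ord_\Delta$ is linear on the cones of $\Sigma$ (it is concave piecewise linear and its sum with $\ord_{F(\Delta)}$ is linear on $\Sigma_{\tilde\Delta}$, which $\Sigma$ refines), every maximal cone gives $\dim\Delta_\sigma=0$; in particular $Y$ misses all torus fixed points. Thus I only need $F_\sigma\cap M\neq\emptyset$ for the cones $\sigma$ with $\dim\Delta_\sigma\geq 1$.

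The core of the argument is then to show that for such $\sigma$ the face $F_\sigma$ contains a lattice point. Here the hypothesis $C(\Delta)=\Delta$ enters: by Lemma \ref{lemma_canonical_closure_characterization} the polytopes $\Delta$ and $F(\Delta)$ are both cut out by the same normals $S_F(\Delta)=\Sigma[1]$, with right-hand sides $\ord_\Delta(\nu)$ and $\ord_\Delta(\nu)+1$ respectively. Using this, together with the Minkowski decomposition $\tilde\Delta=\Delta+F(\Delta)$ of faces $\tilde\Delta_\sigma=\Delta_\sigma+F(\Delta)_\sigma$ and the fact that $\Sigma$ refines $\Sigma_{\tilde\Delta}$, I would argue that $\dim\Delta_\sigma\geq 1$ forces $\dim F(\Delta)_\sigma\geq 1$, so that $F_\sigma$ is a positive-dimensional face of $k\cdot F(\Delta)$ and therefore contains an edge of $k\cdot F(\Delta)$. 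The assumption that every edge of $k\cdot F(\Delta)$ contains a lattice point then yields $m\in F_\sigma\cap M$, a section $\chi^m$ nonvanishing along $Y\cap O(\sigma)$. Ranging over all $\sigma$ meeting $Y$ gives basepoint freeness of $|L(k\cdot F(\Delta))|$ along $Y$, and restriction to $Y$ shows $k\cdot K_Y$ is basepoint free.

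The step I expect to be the main obstacle is precisely the comparison of the face structures of $\Delta$ and $F(\Delta)$, namely the implication $\dim\Delta_\sigma\geq 1\Rightarrow\dim F(\Delta)_\sigma\geq 1$. The difficulty is that passing from $\Delta$ to the smaller polytope $F(\Delta)$ can collapse a positive-dimensional face $\Delta_\sigma$ onto a lower-dimensional face $F(\Delta)_\sigma$; an edge of $\Delta$ can in principle erode to a single vertex of $F(\Delta)$, and such a vertex need not be a lattice point once $\dim\Delta\geq 3$. Ruling this out — or, in the collapsed case, showing directly that the resulting vertex of $k\cdot F(\Delta)$ is a lattice point — is exactly where the hypothesis $C(\Delta)=\Delta$ must be used in an essential way, and I would carry it out through the shared normal description of $\Delta$ and $F(\Delta)$ and the refinement $\Sigma\to\Sigma_{\tilde\Delta}$ coming from (\cite{Bat22}).
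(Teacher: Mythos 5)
Your setup coincides with the paper's own proof: identify $H^0(\mathbb{P},k(K_{\mathbb{P}}+Y))$ with $L(k\cdot F(\Delta))$, note that a monomial section $\chi^w$ vanishes on an orbit $O(\sigma)$ either identically or nowhere, and nowhere exactly when $w$ lies on the face $F_\sigma$ of $k\cdot F(\Delta)$ selected by $\sigma$, and use $C(\Delta)=\Delta$ (via Lemma \ref{lemma_canonical_closure_characterization}) to see that $\Sigma$ refines $\Sigma_{\Delta}$, so that initial forms are defined on every cone and $Y$ misses the torus fixed points of $\mathbb{P}$. All of that is correct and is what the paper does. The genuine gap is exactly the step you flag as the "main obstacle," and it cannot be closed the way you propose: the implication $\dim\Delta_\sigma\geq 1\Rightarrow\dim F(\Delta)_\sigma\geq 1$ is not just unproven, it is false under your hypotheses. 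Take
\[ \Delta=\mathrm{conv}\{(0,0),(3,0),(3,2),(2,3),(0,3)\}=[0,3]^2\cap\{x_1+x_2\le 5\}. \]
Then $F(\Delta)=[1,2]^2$, every facet of $\Delta$ moves inward by exactly one, so all five facet normals lie in $S_F(\Delta)$ and $C(\Delta)=\Delta$ by Lemma \ref{lemma_canonical_closure_characterization}; yet for the ray $\sigma$ spanned by $\nu=(-1,-1)\in S_F(\Delta)=\Sigma[1]$ the face $\Delta_\sigma$ is the edge from $(3,2)$ to $(2,3)$, while $F(\Delta)_\sigma$ is the single vertex $(2,2)$. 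So an edge of $\Delta$ really can collapse to a vertex of $F(\Delta)$ even when $C(\Delta)=\Delta$, and no argument from the shared normal description and the refinement $\Sigma\to\Sigma_{\tilde\Delta}$ can rule it out. (In dimension two the collapse is harmless because $F(\Delta)$ is a lattice polytope by Remark \ref{Remark_Jon_Fine_2_dim_Fine_interior}; it is precisely in dimension $\geq 3$, where your proof needs the statement, that no such rescue is available from your sketch.)

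The paper's proof avoids this dimension comparison altogether. It uses the edge hypothesis only to produce a lattice point on every face of $k\cdot F(\Delta)$ of dimension $\geq 1$, so that the corresponding sections are nonvanishing on all orbits lying over such faces; the remaining potential basepoints then lie over the \emph{vertices} of $k\cdot F(\Delta)$, i.e.\ over torus fixed points, and these are disposed of by the $C(\Delta)=\Delta$ argument that the hypersurface does not pass through them --- the one part of this case analysis that you did carry out. To be fair, your worry points at a real subtlety here (an orbit $O(\sigma)$ with $F_\sigma$ a non-lattice vertex of $k\cdot F(\Delta)$ need not itself be a fixed point of $\mathbb{P}$, only part of the fiber over a fixed point of $\mathbb{P}_{F(\Delta)}$, and the paper treats this point very briefly); but your proposed repair goes through a false lemma, and your fallback --- "in the collapsed case, show directly that the resulting vertex of $k\cdot F(\Delta)$ is a lattice point" --- is exactly the statement your route would need and is given no argument at all. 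As written, the proposal therefore does not constitute a proof: the case $\dim\Delta_\sigma\geq 1$, $\dim F(\Delta)_\sigma=0$ is left completely open, and it is the heart of the matter.
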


\begin{proof}
	Denote for $w = (w_1,...,w_n) \in M$ by $\chi^w$ the character 
	\[ t= (t_1,...,t_n) \mapsto t_1^{w_1}...t_n^{w_n}  \]
	corresponding to $w$. Then
	\[ H^0(\mathbb{P}, k(K_{\mathbb{P}} + Y)) \cong \bigoplus\limits_{w \in M \cap k \cdot F(\Delta)} \chi^w \cdot \mathbb{C}.  \]
	Let $M \cap k \cdot F(\Delta) = \{ w_1,...,w_l \}$ and let $\phi_k$ denote the map associated to the linear system $\text{\textbar} k(K_{\mathbb{P}}+Y) \text{\textbar}$. Then on the torus $T = (\mathbb{C}^*)^n$ the map $\phi_k$ is given by 
	\[ t \mapsto (\chi^{w_1}(t),...,\chi^{w_l}(t)).  \]
	By (\cite[Prop.4.1.2]{CLS11}) the divisor of $\chi^{w}$ on $\mathbb{P}$ is \leavevmode \\
	\[ div(\chi^w) = \sum\limits_{\nu_i \in \Sigma[1]} \langle w, \nu_i \rangle D_i. \]
	Let $F$ be an $r$-dimensional face of $k \cdot F(\Delta)$, where $r \geq 1$, and let $\Gamma_1,...,\Gamma_l$ denote all facets of $k \cdot F(\Delta)$ containing $F$. Then by assumption we find an $w \in M \cap F$. Let $div_0(\chi^w)$ denote the zero divisor of $\chi^w$, then \leavevmode \\
	\[ div_0(\chi^w) = k(K_{\mathbb{P}} + Y) + div(\chi^w) = \sum\limits_{\nu_i \in \Sigma[1]} (-k \ord_{F(\Delta)}(\nu_i) + \langle w, \nu_i \rangle) D_i.  \]
	Say that $\nu_i$ is a normal vector to $\Gamma_i$, then we have
	\[ \langle w, \nu_i \rangle = k \ord_{F(\Delta)}(\nu_i), \quad i=1,...,l.  \]
	Thus $div_0(\chi^w)$ does not meet the torus orbit to $F$. Thus the only basepoints of $\phi_k$ might be torus fixed points. But since $C(\Delta) = \Delta$ the hypersurface does not pass through these torus fixed points. Thus $k(K_{\mathbb{P}} + Y)$ is basepointfree on $Y$ and so is $kK_Y$.
\end{proof}

If $\dim \, F(\Delta) = \dim \, \Delta$ then we have for $1 \leq k \leq m$ the following criterion for the morphism $\phi_k$ associated to $\text{\textbar} k(K_{\mathbb{P}} +Y) \text{\textbar}$ to be birational:

\begin{proposition}
	Let $\dim \, F(\Delta) = \dim \, \Delta = n$ and $m$ be as in (\ref{m_index_Fine_interior}). Given $1 \leq k \leq m$ assume that every lattice point in $m \cdot F(\Delta)$ may be written as $\mathbb{Z}$-linear combination of lattice points of $k \cdot F(\Delta)$. Then $\text{\textbar} k(K_{\mathbb{P}} + Y) \text{\textbar}$ and $\text{\textbar} kK_Y \text{\textbar}$ define birational maps.
\end{proposition}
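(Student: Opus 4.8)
The plan is to reduce the statement to a lattice-theoretic condition on the monomial map and then to feed in the birationality of the top map $\theta\circ\pi$. Exactly as in the proof of Proposition \ref{propositon_bpf_plurican_maps}, on the torus $T$ the map $\phi_k$ attached to $|k(K_{\mathbb{P}}+Y)|$ is the monomial map
\[ t \mapsto \big(\chi^{w}(t)\big)_{w\in M\cap k\cdot F(\Delta)}, \]
so in projective coordinates it only sees the differences $w-w'$ of lattice points of $k\cdot F(\Delta)$. Writing $L_k\subset M$ for the subgroup generated by all such differences, the generic fibre of $\phi_k$ is a coset of $\Hom(M/L_k,\mathbb{C}^*)$; hence $\phi_k$ is birational onto its image precisely when $L_k=M$. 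First I would record this reduction together with the analogous statement for $\phi_m$.

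Next I would bring in the case $k=m$. By (\ref{pullback_formula_adjoint_div}) the system $|m(K_{\mathbb{P}}+Y)|$ realises the toric morphism $\theta\circ\pi\colon \mathbb{P}\to\mathbb{P}_{F(\Delta)}$, which is birational when $\dim F(\Delta)=n$ (this is the very birationality exploited in Corollary \ref{theorem_can_model_surf_gen_type}). By the reduction above this says $L_m=M$, where $L_m$ is generated by the differences of lattice points of $m\cdot F(\Delta)$. It therefore suffices to prove the inclusion $L_m\subseteq L_k$: then $M=L_m\subseteq L_k\subseteq M$ forces $L_k=M$ and $\phi_k$ is birational.

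The heart of the argument is to deduce $L_m\subseteq L_k$ from the hypothesis. Fix $w,w'\in M\cap m\cdot F(\Delta)$; I must show $w-w'\in L_k$. By assumption I may write $w=\sum_{u} c_u\,u$ and $w'=\sum_{u} d_u\,u$ with $u$ ranging over $M\cap k\cdot F(\Delta)$ and $c_u,d_u\in\mathbb{Z}$. Fixing one point $u_0\in M\cap k\cdot F(\Delta)$ and subtracting $u_0$ from each occurring $u$ rewrites $w-w'$ as an element of $L_k$ plus the multiple $\big(\sum_u c_u-\sum_u d_u\big)u_0$ of $u_0$. Thus the whole difficulty is concentrated in showing that the leftover coefficient $e:=\sum_u c_u-\sum_u d_u$ satisfies $e\,u_0\in L_k$; equivalently, that the representations of $w$ and $w'$ can be chosen of the same total weight. \textbf{This degree-bookkeeping is the main obstacle}: a bare $\mathbb{Z}$-linear combination does not a priori respect the ``height'' that separates the multiples of $F(\Delta)$, and the purely group-theoretic version of the claim is in fact false. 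I would resolve it by passing to the graded lattice $\overline{M}=M\oplus\mathbb{Z}$ and the cone $\{(x,t): x\in t\cdot F(\Delta),\ t\ge 0\}$ over $F(\Delta)$, inside which the lattice points of $k\cdot F(\Delta)$ and of $m\cdot F(\Delta)$ sit at heights $k$ and $m$; tracking the height pins the two representations to a common total weight and yields $e\,u_0\in L_k$, hence $w-w'\in L_k$. Together with the previous steps this gives $L_k=M$, so $|k(K_{\mathbb{P}}+Y)|$ defines a birational map.

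Finally I would transfer the conclusion from $\mathbb{P}$ to $Y$. Since $\phi_k$ is birational on $\mathbb{P}$, it is injective on a dense open subset $U\subset\mathbb{P}$; by Remark \ref{Proposition_normality} the hypersurface $Y$ meets the torus densely, so $U\cap Y$ is dense in $Y$ and the restriction $\phi_k|_Y$ is generically injective, i.e. birational onto its image. As the complete system $|kK_Y|$ refines the subsystem cut out by $|k(K_{\mathbb{P}}+Y)|$, it defines a birational map as well, which is the assertion for $|kK_Y|$.
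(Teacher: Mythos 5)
Your overall route is essentially the paper's: factor $\phi_k$ on the torus through the monomial map given by the lattice points of $m\cdot F(\Delta)$, obtain birationality of that map from (\ref{pullback_formula_adjoint_div}) and the birationality of $\theta\circ\pi$, and then use the $\mathbb{Z}$-linear-combination hypothesis to descend from $m$ to $k$. Your opening reduction is correct and in fact sharper than what the paper writes down: birationality of the projective monomial map attached to $k\cdot F(\Delta)$ is equivalent to $L_k=M$, where $L_k$ is generated by \emph{differences} of lattice points, and you rightly observe that the hypothesis of the proposition directly controls only the subgroup generated by the lattice points themselves, leaving a stray multiple $e\,u_0$ to be dealt with. (The paper's own proof glosses over exactly this point: the relation $\chi^w=\prod_i(\chi^{w_i})^{a_i}$ shows that the projection is injective on the image of the \emph{affine} map $t\mapsto(\chi^{w}(t))_{w}$, which only yields that the lattice points of $k\cdot F(\Delta)$ generate $M$ as a group, whereas the map defined by the linear system is the projective one and needs the difference lattice to be all of $M$.)

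The genuine gap is that your resolution of this obstacle does not work. The hypothesis is an ungraded statement: $w=\sum_u c_u u$ with $c_u\in\mathbb{Z}$ and no constraint on $\sum_u c_u$. Such a representation lifts to the cone $\{(x,t): x\in t\cdot F(\Delta),\ t\geq 0\}$ in $\overline{M}=M\oplus\mathbb{Z}$, i.e. $(w,m)=\sum_u c_u(u,k)$, if and only if $\sum_u c_u=m/k$; this is neither implied by the hypothesis nor even possible when $k\nmid m$. So there is no height to track, and the coefficient $e$ remains uncontrolled. Your own remark that the purely group-theoretic implication is false is confirmed already in rank one: $A=\{1\}$, $B=\{1,2,3\}$ (realized by $F=[1/2,3/2]$, $k=1$, $m=2$) satisfies the hypothesis, the differences of $B$ generate $\mathbb{Z}$, yet the differences of $A$ generate $0$. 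Hence no formal manipulation of the given data can produce $e\,u_0\in L_k$; some additional geometric input about the lattice points of $k\cdot F(\Delta)$ (beyond the stated hypothesis) would be needed. Your argument therefore reduces the proposition to precisely the claim it never establishes. The final transfer from $\mathbb{P}$ to $Y$ is fine and could even be streamlined: if $L_k=M$ then $\phi_k$ is injective on all of $T$ (trivial kernel of a homomorphism of tori), and $Z_f=Y\cap T$ is dense in $Y$ by Remark \ref{Proposition_normality}, so no discussion of whether a dense open $U\subset\mathbb{P}$ meets $Y$ is required.
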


\begin{proof}
	 The assumptions imply that there are $n+1$ affine linear independent points in $L(k F(\Delta))$. Thus the image of $\phi_k$ has dimension $n$. We may restrict global sections of $k(K_{\mathbb{P}} + Y)$ to $Y$ and restrict the maps to the torus and $Z_f$. But on the torus $\phi_k$ is a finite \textit{morphism}. Thus it is enough to show that $\phi_k$ is birational. Let $L(k F(\Delta)) = \{w_1,...,w_l\}$ and $L(m F(\Delta)) = \{w_1,...,w_s\}$ where $s \geq l$. Restricted to the torus $\phi_k$ factors
	\begin{align*}
	t \mapsto (\chi^{w_1}(t),...,\chi^{w_s}(t)) \rightarrow (\chi^{w_1}(t),...,\chi^{w_l}(t))
	\end{align*}
	with the second map a projection and the first birational onto its image. Under the assumption of the Proposition the projection map, restricted to the image of the first map, has degree one, since for $w \in L(m F(\Delta))$ we find $a_1,...,a_l \in \mathbb{Z}$ such that
	\[ \chi^{w}(t) = \prod_{i=1}^{l} (\chi^{w_i}(t))^{a_i}.  \]	
\end{proof}

\end{document}